\documentclass[10pt,twoside,reqno]{amsart}
\usepackage[all]{xy}
\usepackage{amssymb,latexsym,amsthm,amsmath,mathtools,dsfont,mathrsfs}
\usepackage{enumitem,color}

\usepackage[top=1in,bottom=1in,left=1.2in,right=1.2in]{geometry}
\usepackage{makecell}
\usepackage{mathtools}

\usepackage{longtable}
\usepackage{framed}
\usepackage{float}
\usepackage{hyperref}
\usepackage{url}
\usepackage[capitalize,nameinlink]{cleveref} %theorem and lemmas will not mess up
\usepackage{comment} % to comment big chunk
\usepackage{xcolor} % to get rid of boxes around hyperlinks
\usepackage{enumitem}
\long\def\symbolfootnote[#1]#2{\begingroup%
\def\thefootnote{\fnsymbol{footnote}}\footnote[#1]{#2}\endgroup}

\newcommand{\overbar}[1]{\mkern 1.5mu\overline{\mkern-1.5mu#1\mkern-5.0mu}\mkern 3.5mu}

\makeatletter
\def\imod#1{\allowbreak\mkern10mu({\operator@font mod}\,\,#1)}
\makeatother
\makeatletter
\renewcommand*\env@matrix[1][*\c@MaxMatrixCols c]{%
  \hskip -\arraycolsep
  \let\@ifnextchar\new@ifnextchar
  \array{#1}}
\makeatother

\hypersetup{
    colorlinks,
    linkcolor={red!80!black},
    citecolor={red!80!black},
    urlcolor={blue!80!black}
}
\newtheorem{theorem}{Theorem}[section]
\newtheorem{lemma}[theorem]{Lemma}
\newtheorem{corollary}[theorem]{Corollary}

\newtheorem*{theorem*}{Theorem}
\theoremstyle{definition}

\newtheorem{example}[theorem]{Example}
\newtheorem{conjecture}[theorem]{Conjecture}

\numberwithin{equation}{section}
\newcommand{\ignore}[1]{}

\newcommand{\mynote}[1]{}
\title[On codes in (generalized) symmetric groups]{On codes in (generalized) symmetric groups}
\author[Subrata Barman]{Subrata Barman}
\email{mp21010@iisermohali.ac.in}
\address{Indian Institute of Science Education and Research Mohali, Sector 81, Mohali 140306, India}

\author[Pushpendra Singh]{Pushpendra Singh}
\email{pushpendra@iisermohali.ac.in}
\address{Indian Institute of Science Education and Research Mohali, Sector 81, Mohali 140306, India}
\thanks{}
\date{\today}
\subjclass[2020]{20B30, 20C30, 20F55, 05C25, 94B60}
\keywords{Symmetric Groups, Codes, Conjugacy Classes, Characters, Cayley Graphs}

\begin{document}

\begin{abstract}
In this article, we show that, for the symmetric group $S_n$, the Young subgroup $S_{\lambda}$, with $l(\lambda)\geq 3$, is not a code with respect to a conjugacy class of $S_n$. This provides a partial answer to \cite[Problem 4.1]{FL26}. We then provide a characterization of conjugacy classes $X_i$, such that subgroups $S_{(n-2,1,1)}$, $S_{(n-3,2,1)}$, and $Y_k$ for $k\leq 3$ are codes for $X_1 \cup X_2$. Furthermore, we describe some codes in the finite Coxeter groups of type $B_n$, $C_n$, $D_n$, and in the generalized symmetric group $C_m \wr S_n$.
\end{abstract}

\maketitle

\section{Introduction}
Let $G$ be a finite group and $A$ be a subset of $G$ such that $1 \not \in A$ and $A^{-1}=A$. Then the \textit{Cayley graph} $\rm{Cay}(G,A)$ is a simple graph with vertex set $G$ and and edge set $E=\{(g,h)\;|\;gh^{-1}\in S\}$. A perfect $r$-code \cite{Kra86}, $B$ is a subset of $G$ such that every vertex of $\rm{Cay}(G, A)$ is at most $r$-distance away from exactly one element of $B$. If $r=1$, then $B$ is called a \textit{perfect code}. We say $B$ is a \textit{total perfect code} if every element of $G$ is exactly one distance away from a unique element of $B$. On the other hand, let $A$ be a subset of $G$, we say $A$ divides $rG$, if there exists a subset $B$ of $G$ such that there are exactly $r$ pairs $(a,b) \in A \times B $ such that $ab=g$ for all $g \in G$. Then we can write $A\cdot B=r G$. We note that $A\cdot B=r G$ if and only if each vertex of $\rm{Cay}(G, A)$ has exactly $r$ neighbours in $B$. Such a subset $B$ is called a \textit{code} with respect to $A$ in $G$. 

The authors in \cite{GL20} described various families of such codes for symmetric and special linear groups. For symmetric groups $S_n$, the Young subgroup $Y_{k}=S_k \times S_{n-k},k>3$, is proven to be a code in \cite{FL26} with respect to conjugacy class $x^G$  where $x$ has exactly one cycle of length $2^i$ for $0\leq i\leq j$ and $2^j\leq k \leq 2^{j+1}$ and all other cycles have length at lest $k+1$. Furthermore, in \cite{Feng_21}, for ${\rm PGL}(2,q)$, the authors proved that the dihedral group $D_{q+1}$ is a code for the subset $A=\{g \in {\rm PGL}(2,q)\;|\; g^{q+1}=1, q^2 \neq 1\}$.

In this article, we show that, for the symmetric group $S_n$, the Young subgroup $S_{\lambda}$, with $l(\lambda)\geq 3$, is not a code with respect to a conjugacy class of $S_n$. We provide a characterization of conjugacy classes $X_i$, such that subgroups $S_{(n-2,1,1)}$, $S_{(n-3,2,1)}$, and $Y_k$ for $k\leq 3$ are group codes for $X_1 \cup X_2$. Furthermore, we describe some codes in finite Coxeter groups of type $B_n$, $C_n$, $D_n$, and in the generalized symmetric group $C_m \wr S_n$. 

The article is organized as follows. In section \ref{2}, we give preliminaries on the representation theory of $S_n$ and its connection with group codes. In sections \ref{3}, \ref{Sec:4}, \ref{Sec:5}, we give our main results on the finite Coxeter groups of type $B_n, C_n$, $D_n$ and in the generalized symmetric group $C_m \wr S_n$, respectively.

\section{Preliminaries} \label{2}
In this section, we outline the connection between the group codes and the representation theory of the finite group $G$. This relation is described in \cite{FL26}, and we refer to it for more details.

Let $G$ be a finite group and $\mathbb{C}[G]$ be the group algebra over the complex field. Let $\chi$ be an irreducible character of $G$ and $I^{\chi}=(\mathbb{C}[G]) c^{\chi}$ be the associated left ideal where $c^{\chi}=\frac{\chi(1)}{|G|} \sum\limits_{g}\chi(g^{-1})g$. For a subset $A \subseteq G$, we denote $\overline{A}=\sum\limits_{a\in A}a \in \mathbb{C}[G]$. Let $\rm {Irr}(G)$ denote the set of all irreducible characters of $G$.

\begin{lemma}\cite[Lemma 2.2]{FL26}\label{l1}
With the above notations, we have the following
\begin{enumerate}[label=\normalfont(\roman*)]
\item
$rG=A\cdot B$ if and only if $r\overline{G}=\overline{A}\cdot \overline{B}$
\item
Let $X$ be a conjugacy class of $x\in G$ and $\chi \in \rm{Irr}(G)$ then $\overline{X}w=w\overline{X}=\frac{|X|\chi(x)}{\chi(1_G)}w$, for all $w\in I_{\chi}$.
\end{enumerate}
\end{lemma}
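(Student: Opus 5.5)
For part (i) I would expand the product $\overline{A}\cdot\overline{B}$ in $\mathbb{C}[G]$ and read off coefficients. Since $\overline{A}\cdot\overline{B}=\sum_{a\in A,\,b\in B} ab=\sum_{g\in G} N(g)\,g$, where $N(g)=\#\{(a,b)\in A\times B : ab=g\}$, and the elements of $G$ form a basis of $\mathbb{C}[G]$, the identity $\overline{A}\cdot\overline{B}=r\overline{G}=\sum_g r\,g$ holds exactly when $N(g)=r$ for every $g\in G$. This is precisely the definition of $A\cdot B=rG$, which gives both directions at once.

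For part (ii) I would first observe that $\overline{X}$ lies in the centre of $\mathbb{C}[G]$: conjugation by any $h\in G$ permutes $X$, so $h\overline{X}h^{-1}=\overline{X}$, and therefore $\overline{X}w=w\overline{X}$ for all $w\in\mathbb{C}[G]$. It then suffices to compute the action of $\overline{X}$ on $I^{\chi}$.

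I would let $\rho$ be an irreducible representation affording $\chi$. By Schur's lemma, the central element $\overline{X}$ acts on $\rho$ as a scalar $\omega_\chi$. Taking traces gives $\chi(1)\,\omega_\chi=\sum_{y\in X}\chi(y)=|X|\chi(x)$, since $\chi$ is constant on the class $X$; hence $\omega_\chi=|X|\chi(x)/\chi(1)$. Since $c^{\chi}$ is the central primitive idempotent attached to $\chi$, the ideal $I^{\chi}=\mathbb{C}[G]c^{\chi}$ is the Wedderburn component isomorphic to $\operatorname{End}(V_\chi)$, that is, a direct sum of copies of $\rho$ as a left module. Consequently $\overline{X}$ acts on all of $I^{\chi}$ by the scalar $\omega_\chi$.

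The step that needs the most care is justifying that the scalar action on the abstract module $V_\chi$ carries over to the ideal $I^{\chi}$. I would handle this concretely. Because $\overline{X}$ is central, it acts on $I^{\chi}\cong M_{\chi(1)}(\mathbb{C})$ as a central element of that matrix algebra, hence as a scalar $\lambda$. To identify $\lambda$, I would apply the trace of the left regular representation restricted to $I^{\chi}$, whose character is $\chi(1)\chi$. This gives $\lambda\,\chi(1)^2=\chi(1)\sum_{y\in X}\chi(y)$, so $\lambda=|X|\chi(x)/\chi(1)$, as claimed.
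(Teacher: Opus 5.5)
Your proposal is correct and complete. Part (i) is the comparison of coefficients of $\overline{A}\cdot\overline{B}=\sum_{g}N(g)\,g$ in the group basis. Part (ii) is the standard central-character computation: $\overline{X}$ is central, Schur's lemma makes it a scalar on $V_\chi$, a trace identifies that scalar as $|X|\chi(x)/\chi(1)$, and it then transfers to $I^{\chi}\cong V_\chi^{\oplus\chi(1)}$.

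The paper gives no proof of its own here; it only cites \cite[Lemma 2.2]{FL26}, and your argument is the standard one. Your Schur-lemma paragraph already suffices, so the concrete trace computation on $I^{\chi}$ is a consistent double-check rather than a needed step. Strictly speaking, there it is $\overline{X}c^{\chi}$ that is central in $I^{\chi}$, and so equals $\lambda c^{\chi}$. By linearity over classes, the same argument also gives Lemma \ref{l2} at once.
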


We note that part ${\rm (ii)}$ of the above lemma can be generalized in the following way.
\begin{lemma}\label{l2}
Let $\chi \in \rm{Irr}(G)$ and $X=\cup X_i$ where $X_i$ are distinct conjugacy classes of $ x_i \in G$. Then
$\overline{X}w=w\overline{X}=\frac{w}{\chi(1_G)}{\sum\limits_{i}|X_i|\chi(x_i)}$ for all $w \in I_{\chi}$.
\end{lemma}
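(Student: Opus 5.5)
The plan is to reduce the statement to Lemma \ref{l1}(ii) by linearity. Since the $X_i$ are distinct conjugacy classes, they are pairwise disjoint, so in the group algebra
\[
\overline{X}=\sum_{a\in X}a=\sum_i\sum_{a\in X_i}a=\sum_i \overline{X_i}.
\]
This disjointness is the one point to check: it guarantees that no element of $X$ is counted twice, and it is exactly where the hypothesis ``distinct'' is used.

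Next, fix $w\in I^{\chi}$ and apply Lemma \ref{l1}(ii) to each class $X_i$ separately. This gives
\[
\overline{X_i}\,w=w\,\overline{X_i}=\frac{|X_i|\chi(x_i)}{\chi(1_G)}\,w
\]
for every $i$.

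Finally, use distributivity of multiplication in $\mathbb{C}[G]$ over the finite sum:
\[
\overline{X}w=\sum_i\overline{X_i}\,w=\sum_i\frac{|X_i|\chi(x_i)}{\chi(1_G)}\,w=\frac{w}{\chi(1_G)}\sum_i|X_i|\chi(x_i).
\]
The same computation on the right gives $w\overline{X}$, which yields the stated identity.

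There is no real obstacle. The only care needed is the disjointness of the classes, since overlapping summands would otherwise be double counted, together with the observation that scalars in $\mathbb{C}$ commute with $w$, so that $\frac{w}{\chi(1_G)}\sum_i|X_i|\chi(x_i)$ is just the scalar multiple of $w$.
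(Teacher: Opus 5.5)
Your proof is correct and matches the paper's approach: the paper gives no separate proof and simply presents this as the natural generalization of Lemma~\ref{l1}(ii). Your argument supplies exactly that step, writing $\overline{X}=\sum_i\overline{X_i}$ (distinct classes being disjoint) and summing the identities from Lemma~\ref{l1}(ii) by linearity.
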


\subsection{Representation theory of $S_n$} The irreducible representations of $S_n$ are in bijective correspondence with partitions of $n$. A partition of $n$ is a tuple of non negative integers $\lambda=(\lambda_1,\lambda_2,\dots,\lambda_l)$ where $\lambda_1\geq \lambda_2\dots\geq \lambda_l$ and $\lambda_1+\lambda_2 +\dots+\lambda_l=n$. We can define a partial order on the set of all partitions of $n$. For $\lambda=(\lambda_1,\lambda_2,\dots,\lambda_l)$ and $\mu=(\mu_1,\mu_2,\dots,\mu_k)$, We say $\lambda$ dominates $\mu$ if $\lambda_1 + \lambda_2 + \dots  + \lambda_i\geq \mu_1+ \mu_2 + \dots +\mu_i$  for all $i\geq 1$ and denote by $\lambda\trianglerighteq \mu$.

A Young diagram of $\lambda$ consists of $n$ blocks arranged in $l$ left-justified rows, where the $i$th row has $\lambda_i$ blocks. A $\lambda$-tableau $t$ is a Young diagram with its blocks filled with $1,2,\dots,n$. We note that $\sigma \in S_n$ acts on $t$ in a natural way by permuting entries of $t$. The column stabilizer of $t$ is the subgroup $C_t$ of $S_n$ that preserves the columns of $t$. That is, $\sigma\in C_t$ if and only if $\sigma(i)$ is in the same column as $i$ for each $i \in \{1,...,n\}$.

We define an equivalence relation on the set of $\lambda$-tableau with $t_1 \sim t_2$ if they have the same entries in each row. We denote the set of equivalence classes $[t]$ with $T^{\lambda}$ and note that $|T^{\lambda}|=\frac{n!}{\lambda_1!\lambda_2!\dots \lambda_l!}$. Further, $S_n$ acts on $T^{\lambda}$ by defining $\sigma[t] =[\sigma.t]$. Let $M^{\lambda}=\mathbb CT^{\lambda}$ and $S^\lambda$ be the subspace of $M^\lambda$ spanned by the $\lambda$-polytabloids $e_t$, where $e^t=\sum\limits_{\sigma \in C_t}\rm{sgn}(\sigma)\sigma[t]$. The subspace $S^\lambda$ is called Specht module and the representation $\phi^{\lambda}: S_n \to GL(S^{\lambda})$ is an irreducible representation of $S_n$. We refer to \cite{steinberg2012representation} for more details. 

For $\lambda=(\lambda_1,\lambda_2,\dots,\lambda_l)$, we denote $S_{\lambda}$ as subgroup $S_{\lambda_1}\times S_{\lambda_2}\times\dots\times S_{\lambda_l}$. If $\lambda=(n-k,k)$ then we use $Y_k$ to denote the subgroup $S_{k}\times S_{n-k}$.

\begin{lemma}\label{l3}
The Specht module $S^{\mu}$ is an irreducible component of $(\mathbb{C}S_n)\overline{S_{\lambda}}$ if and only if $\mu \trianglerighteq \lambda$. Moreover, $(\mathbb{C}S_n)\overline{S_{\lambda}}= \bigoplus\limits_{\theta \trianglerighteq \lambda} U^{\theta}$, where $U^{\theta} \subseteq I^{\theta}$ is a direct sum of $K_{\theta{\lambda}}$ minimal left ideals of $\mathbb{C}S_n$.
\end{lemma}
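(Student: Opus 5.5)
The plan is to identify the left ideal $(\mathbb{C}S_n)\overline{S_\lambda}$ with the permutation module $M^\lambda$. Then Young's rule and the Wedderburn decomposition of $\mathbb{C}S_n$ give the statement.

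\textbf{Step 1: the ideal is the permutation module.} Let $M=(\mathbb{C}S_n)\overline{S_\lambda}$. This is spanned by the elements $g\overline{S_\lambda}=\overline{gS_\lambda}$, one for each left coset $gS_\lambda$. These elements have pairwise disjoint supports, so they are linearly independent. The map $g\overline{S_\lambda}\mapsto gS_\lambda$ is therefore an isomorphism of left $\mathbb{C}S_n$-modules
\[
M\;\cong\;\mathbb{C}[S_n/S_\lambda]=\mathrm{Ind}_{S_\lambda}^{S_n}\mathbf{1}.
\]
Fix a tableau $t_0$ whose $i$-th row is the block of $\{1,\dots,n\}$ moved by the $i$-th factor $S_{\lambda_i}$. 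The stabiliser of $[t_0]$ in $T^\lambda$ is exactly $S_\lambda$, and $S_n$ acts transitively on $T^\lambda$. Hence $\mathbb{C}[S_n/S_\lambda]\cong M^\lambda$ as $S_n$-modules.

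\textbf{Step 2: Young's rule.} I will quote the standard result (e.g.\ from \cite{steinberg2012representation}) that
\[
M^\lambda\cong\bigoplus_{\theta} K_{\theta\lambda}\,S^\theta .
\]
Here $K_{\theta\lambda}$ is the Kostka number, i.e.\ the number of semistandard $\theta$-tableaux of content $\lambda$. I also need the fact that $K_{\theta\lambda}\neq 0$ if and only if $\theta\trianglerighteq\lambda$.

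The "only if" direction is the usual argument. In a semistandard tableau all entries $\le i$ lie in the first $i$ rows, so $\lambda_1+\dots+\lambda_i\le\theta_1+\dots+\theta_i$.

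The "if" direction needs a construction. I would start from the row-filling tableau of shape $\lambda$, which shows $K_{\lambda\lambda}=1$. Then I would move boxes upward along raising operations in the dominance order, each time keeping a valid semistandard filling. Alternatively I would simply cite this fact.

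Combining Steps 1 and 2, $S^\mu$ appears in $M$ if and only if $\mu\trianglerighteq\lambda$, which is the first claim.

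\textbf{Step 3: matching with the ideals $I^\theta$.} Write $\mathbb{C}S_n=\bigoplus_\theta I^\theta$, where $I^\theta=(\mathbb{C}S_n)c^\theta$ is the two-sided isotypic component for $\chi^\theta$ and the $c^\theta$ are central orthogonal idempotents. Set $U^\theta=Mc^\theta$.

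Because $c^\theta$ is central, $U^\theta=Mc^\theta=c^\theta M$. This is a left ideal contained in $I^\theta$, and $M=\bigoplus_\theta U^\theta$.

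$U^\theta$ is the $\chi^\theta$-isotypic part of $M$, so by Step 2 it is isomorphic to $K_{\theta\lambda}$ copies of $S^\theta$. Finally, $U^\theta$ is a left submodule of the semisimple module $I^\theta$. Any decomposition of $U^\theta$ into simple submodules therefore consists of minimal left ideals of $\mathbb{C}S_n$, and there are exactly $K_{\theta\lambda}$ of them. In particular $U^\theta=0$ unless $\theta\trianglerighteq\lambda$.

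\textbf{Main obstacle.} The only substantive input is the Kostka positivity criterion in Step 2. Everything else is formal. If a self-contained treatment is wanted, I would first try the classical Specht-module argument: a nonzero homomorphism $S^\theta\to M^\lambda$ exists only when $\theta\trianglerighteq\lambda$ (James' lemma). For the converse I would use the semistandard homomorphism basis, or the explicit construction of a semistandard tableau of shape $\theta$ and content $\lambda$.
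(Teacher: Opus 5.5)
Your proof is correct. It is not really comparable to the paper's argument, because the paper gives none: its proof of Lemma \ref{l3} is the single sentence ``The proof follows from \cite{FL26}.''

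You instead give a self-contained reduction.
\begin{itemize}
\item You identify $(\mathbb{C}S_n)\overline{S_\lambda}$ with $\mathbb{C}[S_n/S_\lambda]\cong M^\lambda$, using the disjoint-support basis $\overline{gS_\lambda}$.
\item You import Young's rule $M^\lambda\cong\bigoplus_\theta K_{\theta\lambda}S^\theta$ and the criterion $K_{\theta\lambda}\neq 0\iff\theta\trianglerighteq\lambda$.
\item You split off the isotypic pieces $U^\theta=(\mathbb{C}S_n)\overline{S_\lambda}\,c^\theta$ using the central idempotents $c^\theta$.
\end{itemize}
The citation buys brevity. Your route shows that the only non-formal inputs are Young's rule and Kostka positivity.

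Your Step 3 also gives something the paper relies on but never states. By centrality, $U^\theta=(\mathbb{C}S_n)\,c^\theta\overline{S_\lambda}$. Hence $c^\theta\overline{S_\lambda}\neq 0$ if and only if $U^\theta\neq 0$, if and only if $\theta\trianglerighteq\lambda$. This is exactly the fact used, in both directions, in the proof of Lemma \ref{l4}.

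One caution concerns the ``if'' half of Kostka positivity. Your sketch of moving boxes of the shape upward while ``keeping a valid semistandard filling'' does not work as stated.

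Take content $(1,1,1,1)$ and the tableau of shape $(2,1,1)$ with rows $(1,4),(2),(3)$. Moving the box of row $3$ to the end of row $2$ gives rows $(1,4),(2,3)$, which is not column-strict. A valid filling of shape $(2,2)$ does exist, but producing it in general is the actual content of the claim.

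So either cite the criterion, as you offer to, or argue on the content side:
\begin{itemize}
\item Fix the shape $\theta$ and start from the tableau whose $i$-th row consists of $i$'s.
\item Lower the content by changing into $a+1$ the rightmost $a$ of a row, chosen so that this $a$ has no $a+1$ directly below it.
\item Such an $a$ exists whenever the current content has more $a$'s than $(a+1)$'s, and the move preserves semistandardness.
\item Every single-box transfer to a lower row between partitions, and hence every step down in dominance order from $\theta$ to $\lambda$, is a chain of such moves.
\end{itemize}
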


\begin{proof}
The proof follows from \cite{FL26}.
\end{proof}

We note that the \cite[Lemma 3.2]{FL26} can be generalized for all young subgroups $S_\lambda$ with $\lambda \neq (n)$ and for the union of conjugacy classes. We give the proof below.

\begin{lemma}\label{l4}
Let $r$ be a positive integer and $X=\cup X_i$ where $X_i$ are conjugacy classes of $S_n$.  Then $rS_n=XS_{\lambda}$ if and only if $\sum\limits_{i}|X_i|\chi_{\mu}(x_i)=0$, with $x_i \in X_i$ for all $\mu \trianglerighteq \lambda$ and $\mu \neq (n)$.
\end{lemma}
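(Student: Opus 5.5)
We will work entirely in the group algebra $\mathbb{C}S_n$. By Lemma~\ref{l1}(i), the identity $rS_n = X S_\lambda$ is equivalent to $r\overline{S_n} = \overline{X}\,\overline{S_{\lambda}}$. Since $X$ is a union of conjugacy classes, $\overline{X}$ is central, so this is also $\overline{S_\lambda}\,\overline{X}$; for definiteness we write everything with left ideals as in Lemma~\ref{l3}. Decompose $\overline{S_\lambda} = \sum_{\theta} u_\theta$ according to the Wedderburn decomposition $\mathbb{C}S_n = \bigoplus_\theta I^\theta$. Lemma~\ref{l3} gives $\overline{S_\lambda}\in(\mathbb{C}S_n)\overline{S_\lambda} = \bigoplus_{\theta\trianglerighteq\lambda}U^\theta$. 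Hence $u_\theta = 0$ unless $\theta \trianglerighteq \lambda$.

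We next claim $u_\theta\neq 0$ for every $\theta\trianglerighteq\lambda$. Since $\overline{S_\lambda}$ generates the left ideal $(\mathbb{C}S_n)\overline{S_\lambda}$, its component in $I^\theta$ generates $U^\theta$. This ideal is nonzero because $K_{\theta\lambda}\ge 1$ when $\theta\trianglerighteq\lambda$.

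By Lemma~\ref{l2}, $\overline{X}u_\theta = c_\theta u_\theta$ with
\[
c_\theta = \frac{1}{\chi_\theta(1)}\sum_i |X_i|\chi_\theta(x_i).
\]
So $\overline{X}\,\overline{S_\lambda} = \sum_{\theta\trianglerighteq\lambda} c_\theta u_\theta$. On the other side, $\overline{S_n}$ lies in the trivial isotypic component $I^{(n)}$, where it equals $u_{(n)}$ up to scalar. Indeed, $\overline{S_n}\,\overline{S_\lambda}=|S_\lambda|\overline{S_n}$, and the $I^{(n)}$-component of $\overline{S_\lambda}$ is $\frac{|S_\lambda|}{n!}\overline{S_n}$. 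Comparing components, $r\overline{S_n}=\overline{X}\,\overline{S_\lambda}$ holds if and only if two things hold:
\begin{itemize}
\item $c_\theta u_\theta=0$ for all $\theta\trianglerighteq\lambda$ with $\theta\ne(n)$, which (as $u_\theta\ne0$) means $\sum_i|X_i|\chi_\theta(x_i)=0$;
\item $c_{(n)}\frac{|S_\lambda|}{n!}\overline{S_n}=r\overline{S_n}$, that is, $|X|\,|S_\lambda| = r\,n!$.
\end{itemize}

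The remaining point is that the statement has no explicit condition on $r$. For the forward direction, we simply read off the vanishing conditions. For the converse, the vanishing conditions give $\overline{X}\,\overline{S_\lambda} = \frac{|X||S_\lambda|}{n!}\overline{S_n}$. The coefficient $r:=|X||S_\lambda|/n!$ must be a nonnegative integer, since $\overline{X}\,\overline{S_\lambda}$ has integer coefficients and $\overline{S_n}$ has all coefficients $1$. It is positive because $X\neq\emptyset$. Thus $XS_\lambda = rS_n$ with this $r$, which is the only possible value; we interpret the statement with this understanding.

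The main obstacle is justifying $u_\theta\neq0$ for every $\theta\trianglerighteq\lambda$. This is exactly where the "only if" uses Lemma~\ref{l3}, specifically the nonvanishing of the Kostka numbers $K_{\theta\lambda}$ for $\theta\trianglerighteq\lambda$.
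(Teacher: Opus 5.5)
Your proof is correct and follows essentially the same route as the paper: you project $\overline{S_\lambda}$ onto the isotypic ideals $I^\theta$ (the paper's $c^\mu\overline{S_\lambda}$ is your $u_\mu$), use Lemma~\ref{l3} to decide which projections are nonzero, and use Lemma~\ref{l2} for the scalar action of $\overline{X}$. Your explicit comparison of the trivial components goes slightly further than the paper's converse, which only shows $\overline{X}\,\overline{S_\lambda}\in I^{(n)}$: it shows the vanishing conditions force $r=|X|\,|S_\lambda|/n!$, so the statement must be read with $r$ determined by $X$ and $\lambda$ rather than arbitrary.
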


\begin{proof}

First suppose that $rS_n=X\cdot S_{\lambda}$, which implies $\overline{X}~\overline{S_\lambda}\in I^{(n)}$. Let $\mu \trianglerighteq \lambda$ and $\mu \neq (n)$, then we have $c^{\mu}\overline{S_\lambda} \neq 0$ and if $\sum\limits_{i}|X_i|\chi_{\mu}(x_i)\neq 0$ then $c^{\mu}\overline{X}=\alpha_{\mu}c^{\mu}$ for some $\alpha_\mu \in \mathbb C \backslash \{0\}$ by Lemma \ref{l2}. Then we have
\[\alpha_\mu(c^\mu\overline{S_\lambda})=c^\mu\overline{X}~\overline{S_\lambda}=c^\mu(r\overline{S_n})=0\]
which is a contradiction since $\alpha_\mu(c^\mu\overline{S_\lambda})\neq 0$.

Conversely assume that $\sum\limits_{i}|X_i|\chi_{\mu}(x_i)=0$ for all $\mu \trianglerighteq \lambda$ and $\mu \neq (n)$. Then by Lemma \ref{l1}{\rm (i)}, it is enough to show that $\overline{X}\cdot\overline{S_\lambda}\in I^{(n)}$ where $I^{(n)}$ is the $\mathbb
C$-vector space spanned by $\overline{S_n}$. If $\mu \not \trianglerighteq \lambda$ then by \cite[Lemma 2.1]{FL26}, we have $c^{\mu}\overline{S_\lambda}=0$ since $\overline{S_\lambda} \in \bigoplus\limits_{\theta\trianglerighteq \lambda}I^{\theta}$ by Lemma \ref{l3}. If $\mu \trianglerighteq \lambda$ and $\mu \neq (n)$ then $c_{\mu}\overline{X}=0$ by Lemma \ref{l2} and our assumption. Therefore, for any $\mu \neq (n)$, we have $c^{\mu}\overline{X}\overline{S_{\lambda}}=0$, which implies $\overline{X}\overline{S_{\lambda}} \in I^{(n)}$.

\end{proof}

\subsection{Character theory of symmetric groups}
Let $\lambda \vdash k$ and $\Lambda=(n-k,k)\vdash n$. Let $x\in S_n$ have the partition type $(1^{a_1},2^{a_2},3^{a_3},\dots)$. Let $X^{\Lambda}$ denote the irreducible character of $S_n$ corresponding to partition $\Lambda\vdash n$. Then we have
\[X^{\Lambda}(x)= \sum\limits_{\rho,\sigma} (-1)^{l(\sigma)}z_{\sigma}^{-1}\chi_{\rho\cup\sigma}^{\lambda}\binom{a}{\rho} \] 
summed over partitions $\rho,\sigma$ such that $|\rho|+|\sigma|=|\lambda|$. Here $l(\sigma)$ denotes the length of partition $\sigma$, $\rho\cup\sigma$ is a partition whose parts are those of $\rho$ and $\sigma$, $\chi^{\lambda}$ denotes the  irreducible character of $S_k$ corresponding to partition $\lambda\vdash k$, and $\chi_{\rho\cup\sigma}^{\lambda}$ is its value on conjugacy class of partition $\rho\cup\sigma \vdash k$, $z_\sigma=\prod\limits_{i\geq 1}i^{m_i}{m_i}!$ where $\sigma=(1^{m_1},2^{m_2},\dots)$ and $\binom{a}{\rho}=\prod\limits_{r\geq 1}\binom{a_r}{n_r}$ where $\rho=(1^{n_1},2^{n_2},\dots)$. For more details, we refer the reader to \cite{macdonald1998symmetric}. 

\begin{lemma}\cite{baker2016character} \label{characters}
Let $\mu \vdash n$ and $\chi_\mu$ be the corresponding irreducible character of $S_n$. Let $a_k$ denote the number of $k$-cycles in the cycle type of $x \in S_n$. Then we have the following:
\begin{enumerate}[label=\normalfont(\roman*)]
    \item $\chi_{(n)}(x)=1$
    \item $\chi_{(n-1,1)}(x)=a_1-1$
    \item $\chi_{(n-2,2)}(x)=a_2-a_1+\frac{a_1(a_1-1)}{2}$
    \item $\chi_{(n-3,3)}(x)=a_3+a_1a_2-a_2-\frac{a_1(a_1-1)}{2}+\frac{a_1(a_1-1)(a_1-2)}{6}$
    \item $\chi_{(n-2,1,1)}(x)=-a_2-a_1+1+\frac{a_1(a_1-1)}{2}$
    \item $ \chi_{(n-3,2,1)}(x)= a_1 -a_3-a_1(a_1-1)+\frac{a_1(a_1-1)(a_1-2)}{3}$
    \item $\chi_{(n-3,1,1,1)}(x) =\frac{(a_1-1)(a_1-2)(a_1-3)}{6}-(a_1-1)a_2+a_3 $

\end{enumerate}
\end{lemma}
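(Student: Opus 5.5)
The statement collects explicit formulas, so the plan is to obtain every character from a permutation character by subtraction, rather than to evaluate the general formula for $X^{\Lambda}$ case by case. Part (i) is trivial. For the two-row partitions I will use the permutation modules $M^{(n-k,k)}=\mathbb{C}T^{(n-k,k)}$. Their character at $x$ is the number of $k$-subsets of $\{1,\dots,n\}$ fixed by $x$, i.e.\ the number of ways to write a $k$-set as a union of cycles of $x$. This gives
\[
\begin{aligned}
\pi_1&=a_1, & \pi_2&=\tbinom{a_1}{2}+a_2, & \pi_3&=\tbinom{a_1}{3}+a_1a_2+a_3 .
\end{aligned}
\]
Young's rule, $M^{(n-k,k)}\cong\bigoplus_{j\le k}S^{(n-j,j)}$, which is the Kostka multiplicity statement underlying Lemma \ref{l3}, then yields $\chi_{(n-k,k)}=\pi_k-\pi_{k-1}$. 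This gives (ii), (iii) and (iv) immediately. I have checked (iv): $\pi_3-\pi_2=a_3+a_1a_2-a_2-\binom{a_1}{2}+\binom{a_1}{3}$.

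For the hooks (v) and (vii) I will use the standard fact $S^{(n-k,1^k)}\cong\Lambda^k S^{(n-1,1)}$. With $V=M^{(n-1,1)}=S^{(n)}\oplus S^{(n-1,1)}$ we get $\Lambda^kV\cong\Lambda^kS^{(n-1,1)}\oplus\Lambda^{k-1}S^{(n-1,1)}$. Hence the alternating sum gives $\chi_{(n-k,1^k)}=\sum_{j=0}^{k}(-1)^{k-j}e_j(x)$, where $e_j(x)=\operatorname{tr}(x\mid\Lambda^jV)$. These $e_j$ come from $\det(1+tx\mid V)=\prod_{\text{cycles}}\bigl(1-(-t)^{\ell}\bigr)$, so
\[
e_0=1,\quad e_1=a_1,\quad e_2=\tbinom{a_1}{2}-a_2,\quad e_3=\tbinom{a_1}{3}-a_1a_2+a_3 .
\]
Then $e_2-e_1+e_0$ is exactly (v). Also $e_3-e_2+e_1-1$ simplifies to $\binom{a_1-1}{3}-(a_1-1)a_2+a_3$ via $\binom{a_1}{3}-\binom{a_1}{2}+a_1-1=\binom{a_1-1}{3}$, which is (vii).

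The main obstacle is (vi), since $(n-3,2,1)$ is neither a two-row partition nor a hook. I will use Young's rule for $M^{(n-3,2,1)}$. Its character counts pairs (a fixed point, a disjoint invariant $2$-set), namely $a_1\bigl(\binom{a_1-1}{2}+a_2\bigr)$. The Kostka numbers $K_{\mu,(n-3,2,1)}$ are $1,2,2,1,1,1$ for $\mu=(n),(n-1,1),(n-2,2),(n-2,1,1),(n-3,3),(n-3,2,1)$ respectively. So $\chi_{(n-3,2,1)}$ is this count minus $1+2\chi_{(n-1,1)}+2\chi_{(n-2,2)}+\chi_{(n-2,1,1)}+\chi_{(n-3,3)}$, with each term taken from the parts already proved. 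What remains is routine polynomial algebra in $a_1,a_2,a_3$, where the $a_2$ terms should cancel. The error-prone step is getting the Kostka multiplicities right, which I will verify by listing the semistandard tableaux of content $(n-3,2,1)$. As a fallback, I can check the resulting formula against the formula for $X^{\Lambda}$ with $\lambda=(2,1)$, and at $x=1$ (degree $n(n-2)(n-4)/3$).
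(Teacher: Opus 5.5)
Your proposal is correct, and it takes a different route from the paper. The paper gives no proof of this lemma. It quotes the formulas from Baker--Early, and in the preceding subsection it records Macdonald's character-polynomial formula $X^{\Lambda}(x)=\sum_{\rho,\sigma}(-1)^{l(\sigma)}z_\sigma^{-1}\chi^{\lambda}_{\rho\cup\sigma}\binom{a}{\rho}$. From that formula all seven items follow by one mechanical computation, using only the character tables of $S_1,S_2,S_3$; item (vi) corresponds to $\lambda=(2,1)$.

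You instead build everything from permutation characters:
\begin{itemize}
\item Young's rule for the two-row cases;
\item $S^{(n-k,1^k)}\cong\Lambda^kS^{(n-1,1)}$ together with $\det(1+tx)=\prod(1-(-t)^{\ell})$ for the hooks;
\item Kostka subtraction from $M^{(n-3,2,1)}$ for (vi).
\end{itemize}
Your Kostka numbers $1,2,2,1,1,1$ are right. The algebra you left as routine does close up. The subtracted quantity is
\[
1+2\chi_{(n-1,1)}+2\chi_{(n-2,2)}+\chi_{(n-2,1,1)}+\chi_{(n-3,3)}=a_1a_2+a_3+a_1(a_1-1)+\tbinom{a_1}{3}-a_1 .
\]
Since $a_1\binom{a_1-1}{2}=3\binom{a_1}{3}$, subtracting this from $a_1\binom{a_1-1}{2}+a_1a_2$ leaves $2\binom{a_1}{3}+a_1-a_1(a_1-1)-a_3$. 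This is exactly (vi), and the $a_2$ terms cancel as you predicted.

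Your route is elementary and self-contained. Each formula gets a transparent meaning as a count of invariant sets, corrected by lower terms.

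Its costs are twofold. First, it is recursive: (vi) depends on (ii)--(v) and on the Kostka list being right. Second, the decompositions as you state them need $n$ large enough, for example $n\ge 6$ so that $(n-3,3)$ is a partition and the dominance list is complete. The character-polynomial formula handles each $\lambda$ independently and directly. The size hypothesis on $n$ is already implicit in the statement, so this is not a gap in your argument.
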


\section{Codes in symmetric groups} \label{3}
In this section, we describe the existence and non-existence of codes for the symmetric group $S_n$ with respect to some conjugacy-closed subsets.

\begin{lemma}\label{ml1}
Let $S_n$ be the symmetric group of degree $n$. Let $\lambda=(n-2,2) \vdash n$ and $\mu=(n-2,1,1) \vdash n$ be partitions of $n$. Then for any $g \in S_n$, $\chi_{\lambda}(g)$ and $\chi_{\mu}(g)$ can not be zero simultaneously. 
\end{lemma}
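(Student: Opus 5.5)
The approach is to use the explicit character formulas of Lemma \ref{characters} and reduce the statement to an elementary fact about integers. Let $g \in S_n$ have $a_k$ cycles of length $k$; in particular $a_1$ and $a_2$ are nonnegative integers. We may assume $n \geq 4$, so that both $\lambda=(n-2,2)$ and $\mu=(n-2,1,1)$ are genuine partitions of $n$ and the formulas apply.

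By Lemma \ref{characters}(iii) and (v),
\[
\chi_{\lambda}(g)=a_2-a_1+\frac{a_1(a_1-1)}{2}, \qquad \chi_{\mu}(g)=-a_2-a_1+1+\frac{a_1(a_1-1)}{2}.
\]
The key step is to subtract these two expressions. The terms $-a_1+\frac{a_1(a_1-1)}{2}$ cancel, leaving
\[
\chi_{\lambda}(g)-\chi_{\mu}(g)=2a_2-1 .
\]
Since $a_2$ is an integer, $2a_2-1$ is odd and hence nonzero. Therefore $\chi_\lambda(g)$ and $\chi_\mu(g)$ are never equal, and in particular they cannot both vanish.

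As an independent check, one can add the two formulas instead: this gives $a_1^2-3a_1+1$, whose discriminant is $5$, so it has no integer root. Thus the sum is never zero either, which again rules out simultaneous vanishing.

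There is no real obstacle once Lemma \ref{characters} is available. The only point requiring care is that the formulas are stated for general $n$, so small $n$ (where $\lambda$ or $\mu$ fails to be a partition) should be excluded or handled separately by direct inspection.
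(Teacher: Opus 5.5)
Your proof is correct and is the same as the paper's. Subtracting the formulas from Lemma \ref{characters}(iii) and (v) gives $\chi_\lambda(g)-\chi_\mu(g)=2a_2-1\neq 0$, so the two values can never both vanish (your extra check that the sum $a_1^2-3a_1+1$ has no integer root is also valid, and restricting to $n\geq 4$ is a sensible caveat the paper leaves implicit).
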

\begin{proof}
Let $g \in S_n$ and $a_k$ denote the number of $k$-cycles in the cycle type of $g$. Then from Lemma \ref{characters}, we get that
$\chi_{\lambda}(g)=a_2-a_1+\frac{a_1(a_1-1)}{2}$ and  $\chi_{\mu}(g)=-a_2-a_1+1+\frac{a_1(a_1-1)}{2}$.\\
Suppose $\chi_{\lambda}(g)=\chi_{\mu}(g)=0$. Then upon solving $\chi_{\lambda}(g)-\chi_{\mu}(g)=0$, we get $2a_2=1$ which is a contradiction since $a_k$ are non negative integers.
\end{proof}

\begin{lemma}\label{ml2}
Let $S_n$ be the symmetric group of degree $n$. Let $\lambda= (\lambda_1,\lambda_2,\lambda_3) \vdash n$ be a partition of $n$ and let $S_\lambda$ be the  Young subgroup corresponding to $\lambda$. Then there does not exist any conjugacy class $X$ of $S_n $ such that $rS_n=XS_{\lambda}$ where $r \in \mathbb{N}$.

\end{lemma}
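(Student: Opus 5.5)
The plan is to reduce the statement to Lemma \ref{ml1} via the character criterion of Lemma \ref{l4}. Specialised to a single conjugacy class $X=x^{S_n}$, Lemma \ref{l4} says that $rS_n=XS_\lambda$ holds for some $r$ if and only if $|X|\chi_\mu(x)=0$, i.e. $\chi_\mu(x)=0$, for every partition $\mu\trianglerighteq\lambda$ with $\mu\neq(n)$. It therefore suffices to exhibit two partitions $\mu,\mu'$, both dominating $\lambda$ and both different from $(n)$, whose characters cannot vanish simultaneously at any $x\in S_n$.

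The natural candidates are $(n-2,2)$ and $(n-2,1,1)$, because Lemma \ref{ml1} shows that $\chi_{(n-2,2)}(x)$ and $\chi_{(n-2,1,1)}(x)$ are never both zero. The remaining task is to check that both dominate $\lambda=(\lambda_1,\lambda_2,\lambda_3)$. I read the statement as assuming all three parts are positive, so $l(\lambda)=3$; otherwise it reduces to two-row cases, which are not asserted here.

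For $(n-2,1,1)$ the partial sums are $n-2,\,n-1,\,n,\,n,\dots$. For $\lambda$ they are $\lambda_1,\ \lambda_1+\lambda_2,\ n,\dots$. Since $\lambda_2,\lambda_3\ge 1$, we have $\lambda_1\le n-2$ and $\lambda_1+\lambda_2=n-\lambda_3\le n-1$, so $(n-2,1,1)\trianglerighteq\lambda$. For $(n-2,2)$ the partial sums are $n-2,\,n,\,n,\dots$, which dominate those of $(n-2,1,1)$; by transitivity $(n-2,2)\trianglerighteq\lambda$ as well.

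Neither partition equals $(n)$, which needs $n\ge 3$; this is automatic since $\lambda$ has three positive parts. Hence if some $X$ satisfied $rS_n=XS_\lambda$, Lemma \ref{l4} would force $\chi_{(n-2,2)}(x)=\chi_{(n-2,1,1)}(x)=0$ for $x\in X$, contradicting Lemma \ref{ml1}.

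I do not expect a real obstacle. The only delicate points are the small edge cases. For $n=3$, $\lambda=(1,1,1)$, we get $(n-2,2)=(1,2)$, which is not a partition. In that case I would instead use the explicit characters of $S_3$: the class $X$ would have to kill both the standard character and the sign character, and no class of $S_3$ does, since the sign character never vanishes. For $n=4$, $(n-2,2)=(2,2)$ is fine. Apart from these, I would simply confirm that the Lemma \ref{characters} formulas used in Lemma \ref{ml1} are valid for all $n\ge 4$.
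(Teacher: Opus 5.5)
Your proposal is correct and follows the paper's proof: both $(n-2,2)$ and $(n-2,1,1)$ dominate $\lambda$, so Lemma \ref{l4} forces $\chi_{(n-2,2)}(x)=\chi_{(n-2,1,1)}(x)=0$, which contradicts Lemma \ref{ml1}. Three things you add go slightly beyond the paper, which leaves them implicit: the explicit dominance check, the reading that all three parts are positive, and the separate treatment of $n=3$ via the sign character, needed because there $(n-2,2)$ is not a partition.
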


\begin{proof}
 Suppose there exist a conjugacy class $X$ such that $rS_n= XS_\lambda$, then by Lemma \ref{l1}, we get $\chi_\mu(g)=0$, where $g\in X$ and $\mu\trianglerighteq \lambda$, $\mu \neq (n)$. We observe that if $\mu = (\mu_1,\mu_2,\mu_3) \trianglerighteq \lambda$ then $(n-2,1,1) \trianglerighteq \mu$, further, we have $(n-2,2) \trianglerighteq (n-2,1,1)$. This would imply $\chi_{(n-2,2)}(g)=\chi_{(n-2,1,1)}(g)=0$ which cannot happen due to Lemma \ref{ml1}. This proves the result.
\end{proof}

\begin{theorem}
Let $S_n$ be the symmetric group of degree $n$. Let $\lambda= (\lambda_1,\lambda_2,\dots,\lambda_l) \vdash n$ with $l\geq 3$ and let $S_\lambda$ be the  Young subgroup corresponding to $\lambda$. Then there does not exist any conjugacy class $X$ of $S_n $ such that $rS_n=XS_{\lambda}$ where $r \in \mathbb{N}$.
\end{theorem}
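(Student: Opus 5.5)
The plan is to reduce to Lemma \ref{ml1}, arguing exactly as in the proof of Lemma \ref{ml2}. Suppose, for contradiction, that some conjugacy class $X=x^{S_n}$ and some $r\in\mathbb{N}$ satisfy $rS_n=XS_\lambda$. Since $X$ is a single class, Lemma \ref{l4} gives $|X|\chi_\mu(x)=0$, hence $\chi_\mu(x)=0$, for every $\mu\trianglerighteq\lambda$ with $\mu\neq(n)$. It therefore suffices to show that both $(n-2,2)$ and $(n-2,1,1)$ dominate $\lambda$ and differ from $(n)$. Then $\chi_{(n-2,2)}(x)=\chi_{(n-2,1,1)}(x)=0$, which contradicts Lemma \ref{ml1}.

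The key step is the combinatorial claim: if $\lambda=(\lambda_1,\dots,\lambda_l)\vdash n$ with $l\geq 3$, then $(n-2,1,1)\trianglerighteq\lambda$. Since $l\ge 3$ and every part is at least $1$, we have $\lambda_1\le n-2$, so the first partial sums satisfy the required inequality. For $i\geq 2$ I will use $\lambda_1+\dots+\lambda_i\le n-(l-i)$, where the right-hand side is $n-1$ for $i=l-1$ and $n$ for $i\ge l$. Meanwhile the partial sums of $(n-2,1,1)$ are $n-1$ at $i=2$ and $n$ for $i\ge 3$. I need to check the case $i=2$ carefully: if $l\geq 3$, then $\lambda_1+\lambda_2\le n-\lambda_3\le n-1$. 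For $i\ge3$ the inequality is trivial. Combining this with the obvious relation $(n-2,2)\trianglerighteq(n-2,1,1)$ and transitivity of dominance gives $(n-2,2)\trianglerighteq\lambda$ as well.

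The degenerate case is small $n$. The partitions $(n-2,2)$ and $(n-2,1,1)$ make sense only when $n\ge 4$ (and $n\geq 3$ for the latter). Since $l\ge3$ forces $n\ge 3$, I will treat $n=3$ separately. There $\lambda=(1,1,1)$, so $S_\lambda$ is trivial, and $XS_\lambda=X$. This can equal $rS_3$ only if $X=S_3$, which is impossible for a single class. Alternatively, apply Lemma \ref{l4} with $\mu=(2,1)$: the value $\chi_{(2,1)}$ is nonzero on the classes of the identity and of the transpositions. On the $3$-cycles it vanishes, so I would instead use $\mu=(1,1,1)$, since the sign character is nonzero everywhere.

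I expect no serious obstacle; the only care needed is the dominance check and these boundary cases.
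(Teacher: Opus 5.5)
Your proposal is correct and is essentially the paper's argument: both $(n-2,2)$ and $(n-2,1,1)$ dominate $\lambda$, so Lemma \ref{l4} forces both characters to vanish on $x$, contradicting Lemma \ref{ml1}, exactly as in the proof of Lemma \ref{ml2}; your separate treatment of $n=3$, where $(n-2,2)$ is not a partition, covers a case the paper passes over. One harmless slip in your aside: $\chi_{(2,1)}$ takes the values $2,0,-1$ on the identity, the transpositions and the $3$-cycles respectively, so it vanishes on transpositions rather than on $3$-cycles, but your direct argument ($X$ would have to equal $S_3$), or the sign character alone, still settles $n=3$.
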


\begin{proof}
 Let $\lambda= (\lambda_1,\lambda_2,\dots,\lambda_l) \vdash n$ with $l\geq 3$ then we note that both $(n-2,1,1)\trianglerighteq \lambda$ and $(n-2,2) \trianglerighteq \lambda$. Hence, the result holds using Lemma \ref{ml2}.
\end{proof}

\begin{theorem}\label{t2}
Let $k \leq 3$ and $n>2k$. Suppose $X=X_1 \cup X_2$ is the union of two distinct conjugacy classes of $S_n$ with $x_i\in X_i$. Let $a_l$ (resp. $b_l$) to denote number of $l$-cycles in $x_i$(resp. $x_j$).
\begin{enumerate}[label=\normalfont(\roman*)]
    \item
    If $k=1$ then $rS_n=XY_{k}$ if and only if $x_i,x_j$ have exactly one fixed point or  $x_i$ has no fixed points, $x_j$ has $b_1\geq 2$ fixed points with $|X_i|=(b_1-1)|X_j|$ for $i\neq j$.
    \item
    If $k=2$ then $rS_n=XY_{k}$ if and only if $x_i,x_j$ have exactly one fixed point, exactly one $2$- cycle or $x_i,x_j$ have exactly one fixed point, $x_i$ has no $2$-cycle, $x_j$ has $b_2$ $2$-cycle with $b_2 \geq 2$ and $|X_i|=(b_2-1)|X_j|$ or $x_i$ has no fixed points, $x_j$ has three fixed points with $|X_i|=2|X_j|$ and $a_2=b_2=0$. or $x_i$ has no fixed points, $x_j$ has two fixed points with $|X_i|=|X_j|$ and $a_2+b_2=1$ 
    \item
    If $k=3$, then $rS_n=XY_{k}$ if and only if $x_i,x_j$ have exactly one fixed point,exactly one $2$-cycle and no $3-$cycles or $x_i,x_j$ has exactly one fixed point, $x_i$ has no $2$-cycle, $x_j$ has $b_2$ $2$-cycle with $b_2 \geq 2$ with $|X_i|=(b_2-1)|X_j|$ and $x_i,x_j$ have no $3$-cycles or $x_i$ has no fixed points, no $2$-cycles, no $3$-cycles, $x_j$ has two fixed points, one $2$-cycle, no $3$-cycle and $|X_i|=|X_j|$ or $x_i$ has no fixed points, one $2$-cycle, $x_j$ has two fixed points, no $2$-cycles with $|X_i|=|X_j|$ and $a_3+b_3=2$ or $x_i$ has no fixed points, no $2$-cycles, $x_j$ has three fixed points, no $2$-cycles with $|X_i|=2|X_j|$ and $2a_3+b_3=2$.
\end{enumerate}
\end{theorem}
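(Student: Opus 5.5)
The plan is to apply Lemma \ref{l4} with $\lambda=(n-k,k)$. Since $n>2k$, the partitions $\mu\neq(n)$ with $\mu\trianglerighteq(n-k,k)$ are exactly $(n-j,j)$ for $1\le j\le k$. Thus $rS_n=XY_k$ holds if and only if
\[
|X_1|\chi_{(n-j,j)}(x_1)+|X_2|\chi_{(n-j,j)}(x_2)=0 \quad\text{for } j=1,\dots,k .
\]
We use the explicit formulas (ii)--(iv) of Lemma \ref{characters}. Writing $a_l,b_l$ for the cycle counts of the two representatives, this gives a system of at most three equations in nonnegative integers, with positive weights $|X_1|,|X_2|$. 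The cases $k=1,2,3$ are nested: the solutions for $k$ are exactly the solutions for $k-1$ that also satisfy the new equation. So I would solve $k=1$ first and then refine.

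For $k=1$ the condition is $|X_i|(a_1-1)+|X_j|(b_1-1)=0$. Because the weights are positive, either both terms vanish, so $a_1=b_1=1$, or they have opposite signs. The only negative value of $a_1-1$ is $-1$, at $a_1=0$, so in the second case $a_1=0$, $b_1\ge2$ and $|X_i|=(b_1-1)|X_j|$. This gives (i).

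For $k=2$ I add the equation for $\chi_{(n-2,2)}=a_2-a_1+\binom{a_1}{2}$ and split along the two branches of (i). In the branch $a_1=b_1=1$ we have $\chi=a_2-1$, and the same sign argument gives either $a_2=b_2=1$, or $a_2=0$, $b_2\ge2$ with $|X_i|=(b_2-1)|X_j|$. In the branch $a_1=0$, $b_1\ge2$ with $|X_i|=(b_1-1)|X_j|$, I divide by $|X_j|$ to get
\[
(b_1-1)a_2+b_2-b_1+\tbinom{b_1}{2}=0 .
\]
For $b_1\ge2$ the term $\binom{b_1}{2}-b_1=b_1(b_1-3)/2$ is $-1$ at $b_1=2$, $0$ at $b_1=3$, and positive for $b_1\ge4$. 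Hence $b_1=2$ forces $a_2+b_2=1$ with $|X_i|=|X_j|$, $b_1=3$ forces $a_2=b_2=0$ with $|X_i|=2|X_j|$, and $b_1\ge4$ is impossible. This gives (ii).

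For $k=3$ I add the equation for $\chi_{(n-3,3)}=a_3+a_1a_2-a_2-\binom{a_1}{2}+\binom{a_1}{3}$ and run through the four subcases of (ii), substituting the known values and weight ratios.
\begin{itemize}
\item If $a_1=b_1=1$ and $a_2=b_2=1$, then $\chi=a_3$, so $a_3=b_3=0$.
\item If $a_1=b_1=1$, $a_2=0$, $b_2\ge2$, then $\chi=a_3$ again, so $a_3=b_3=0$.
\item If $a_1=0$, $b_1=3$, $a_2=b_2=0$ with ratio $2$, we get $2a_3+(b_3-3+1)=0$, i.e.\ $2a_3+b_3=2$.
\item If $a_1=0$, $b_1=2$, $a_2+b_2=1$ with equal weights: when $a_2=0$, $b_2=1$ we get $-0+a_3+b_3+2-1-1=0$, so $a_3=b_3=0$; when $a_2=1$, $b_2=0$ we get $a_3-1+b_3-1=0$, so $a_3+b_3=2$.
\end{itemize}
Since every step is an equivalence, collecting these subcases yields (iii), including the converse directions.

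The main obstacle is bookkeeping rather than any single hard step. One has to handle the roles of $i$ and $j$ symmetrically, and keep the weight ratio $|X_i|/|X_j|$ and the integrality and nonnegativity constraints in view, so that no sign pattern is missed in the $k=3$ case analysis. One must also confirm that the dominance set is exactly $\{(n-j,j)\}$, which uses $n>2k$.
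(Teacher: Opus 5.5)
Your proposal is correct and follows essentially the same route as the paper: reduce via Lemma \ref{l4} to the vanishing of $|X_i|\chi_{(n-j,j)}(x_i)+|X_j|\chi_{(n-j,j)}(x_j)$ for $1\le j\le k$, then solve the nested system using the character polynomials of Lemma \ref{characters}, with a sign argument at each level. Your subcase-by-subcase treatment of $k=3$, and the factorization $b_1(b_1-3)/2$ at $k=2$, spell out what the paper compresses into the combined equation $(b_1-1)(6a_3-4a_2+4b_2-2b_1-2a_2b_1)+6b_3=0$ and the remark that the remaining cases follow ``upon solving''.
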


\begin{proof}
Let $x_i \in X_i$ and $x_j \in X_j$. We use $a_l$ (resp. $b_l$) to denote number of $l$-cycles in $x_i$(resp. $x_j$). For the reverse direction, using Lemma \ref{l4}, it is enough to show that  
\begin{equation}\label{eq:1}
|X_1|\chi_{\mu}(x_i) +|X_2|\chi_{\mu}(x_j)=0
\end{equation}
for all $\mu \trianglerighteq \lambda$ and $\mu \neq (n)$. 

\text{Case {\rm (i)}}. For $\lambda=(n-1,1)$, we have $\mu=(n-1,1)$ and so $\chi_{\mu}(x)=a_1-1$. First, we are given $a_1=b_1=1$. Thus, we have $\chi_{\mu}(x_i)=\chi_{\mu}(x_j)=0$, hence equation \ref{eq:1} is satisfied. 

For the next case, we have $\chi_{\mu}(x_i)=-1$ and $\chi_{\mu}(x_j)=b_1-1$ and $|X_i|=(b_1-1)|X_j|$. Thus, $-1\cdot(b_1-1)|X_j|+(b_1-1)|X_j|=0$. 

For forward direction, if $rS_n=XY_k$ then from Lemma \ref{l4} we get $(a_1-1)|X_i|+(b_1-1)|X_j|=0$, and so $a_1|X_i|+b_1|X_j|=|X_i|+|X_j|$. Since $a_1,b_1 \geq 0$ and $|X_i|,|X_j|>0$, then we will get the required two cases. 

\text{Case {\rm (ii)}}.
For $\lambda=(n-2,2)$, we have $\mu\in \{(n-1,1),(n-2,2)\}$. For $\mu=(n-1,1)$, equation \ref{eq:1} is satisfied for all cases similarly to case ${\rm (i)}$. Now we show the same for $\chi_\mu$ for $\mu=(n-2,2)$. We have $\chi_{\mu}(x)=a_2-a_1+\frac{a_1(a_1-1)}{2}$. For the first case, we have $a_1=b_1=1, a_2=b_2=1$. So $\chi_{\mu}(x_i)=\chi_{\mu}(x_j)=0$. Thus, equation \ref{eq:1} is satisfied. 
 
Next, we have $a_1=b_1=1$, $a_2=0,b_2\geq2$ and $|X_i|=(b_2-1)|X_j|$. We have $\chi_{\mu}(x_i)=-1, \chi_{\mu}(x_j)=b_2-1$. Thus, equation \ref{eq:1} is satisfied.
 
For the next case, we have $a_1=0, b_1=3$ with $|X_i|=2|X_j|$ and $a_2=b_2=0$. We have 
 \begin{align*}
\chi_{\mu}(x_i)|X_i|+\chi_{\mu}(x_j)|X_j|=0\\
\end{align*}

For the next case, we have $a_1=0,b_1=2$ with $|X_i|=|X_j|$ and $a_2+b_2=1$. We have 
 \begin{align*}
\chi_{\mu}(x_i)|X_i|+\chi_{\mu}(x_j)|X_j|&=a_2|X_i|+(b_2-2+\frac{2(2-1)}{2})|X_j|\\
&=(a_2+b_2-1)|X_j|=0
\end{align*}

For forward direction, suppose $\chi_{\mu}(x_i)|X_i| + \chi_{\mu}(x_j)|X_j|=0$
for $\mu=(n-2,2),(n-1,1)$. The computation for $\mu=(n-1,1)$ gives us $a_1=b_1=1$ or $a_1=0,b_1\geq 2$ and $|X_i|=(b_1-1)|X_j|$. For $\mu=(n-2,2)$, we get
$$\left(a_2-a_1+\frac{a_1(a_1-1)}{2}\right)|X_i|+\left(b_2-b_1+\frac{b_1(b_1-1)}{2}\right)|X_j|=0$$

If $a_1=b_1=1$, then above equation gives $(a_2-1)|X_i|+(b_2-1)|X_j|=0$ which implies that $a_2|X_i|+ b_2|X_j|= |X_i|+ |X|_j$. Since $a_2,b_2 \geq 0$ and $|X_i|,|X_j|>0$, then we get the first two cases. 

If $a_1=0,b_1\geq 2$ and $|X_i|=(b_1-1)|X_j|$. Then we have
$$a_2(b_1-1)|X_j|+\left(b_2-b_1+\frac{b_1(b_1-1)}{2}\right)|X_j|=0$$
Since $|X_j|\neq 0$. So, upon simplification, we get $b_1^2-3b_1-2a_2+2b_2+2a_2b_1=0$. Since all variables are non-negative, so $b_1\geq 4$ is not possible. For $b_1=3$, we get $a_2=b_2=0$ and $|X_i|=2|X_j|$. For $b_1=2$, we get $|X_i|=|X_j|$ and $a_2+b_2=1$ i.e $(a_2,b_2)=(1,0)$ or $(0,1)$.

\text{Case {\rm (iii)}}.
For $\lambda=(n-3,3)$, we have $\mu\in \{(n-1,1),(n-2,2),(n-3,3)\}$. For $\mu\in \{(n-1,1),(n-2,2)\}$, equation \ref{eq:1} is satisfied for all cases similarly to case ${\rm (i)}$ and ${\rm (ii)}$. Now we show the same for $\mu=(n-3,3)$.
For the first case, we have $a_1=b_1=1$, $a_2=b_2=1$, $a_3=b_3=0$. So $\chi_{\mu}(x_i)=\chi_{\mu}(x_j)=0$. Thus, equation \ref{eq:1} is satisfied.

Next, we have, $a_1=b_1=1$, $a_2=0,b_2\geq 2$, $a_3=b_3=0$. Then we have $\chi(x_i)=\chi(x_j)=0$ and so equation \ref{eq:1} is satisfied. Similarly direct substitution of next three cases gives $|X_i|\chi_{\mu}(x_i)+|X_j|\chi_{\mu}(x_j)=0$.

For forward direction, suppose $\chi_{\mu}(x_i)|X_i| + \chi_{\mu}(x_j)|X_j|=0$
for $\mu=(n-1,1),(n-2,2),(n-3,3)$. The Lemma \ref{l4}, for $\mu=(n-1,1)$ gives $a_1=b_1=1$ or $a_1=0,b_1\geq 2,|X_i|=(b_1-1)|X_j|$. Using $a_1=b_1=1$, the computation for $\mu=(n-2,2)$ gives $|X_i|(a_2-1)+|X_j|(b_2-1)=0$ and so $a_2=b_2=1$ or $a_2=0,b_2\geq 2,|X_i|=(b_2-1)|X_j|$. In both cases Lemma \ref{l4} for $\mu=(n-3,3)$ gives $a_3=b_3=0$ giving us first two cases. Putting $a_1=0,b_1\geq 2,|X_i|=(b_1-1)|X_j|$ for $\mu=(n-2,2)$ gives $b_1^2-3b_1-2a_2+2b_2+2a_2b_1=0$.
Putting $a_1=0,b_1\geq 2, |X_i|=(b_1-1)|X_j|,b_1^2-3b_1-2a_2+2b_2+2a_2b_1=0$ for $\mu=(n-3,3)$ gives $(b_1-1)(6a_3-4a_2+4b_2-2b_1-2a_2b_1)+6b_3=0$. These two equations upon solving give remaining three cases.

\end{proof}

\begin{example}
We provide an example of conjugacy classes satisfying properties in Theorem \ref{t2}. Let $X_1, X_2$ be conjugacy classes corresponding to partitions $\lambda_1,\lambda_2$ respectively. For case {\rm (i)}, in $S_9$, we can take $\lambda_1=(1,8),\lambda_2=(1,2,6)$ and $\lambda_1=(4,5),\lambda_2=(1,1,2,5)$ for respective subcases. For case {(ii)}, in $S_9$, we can take $\lambda_1=(1,2,6),\lambda_2=(1,2,3,3)$ and $\lambda_1=(1,4,4),\lambda_2=(1,2,2,4)$ and $\lambda_1=(2,7),\lambda_2=(1,1,7)$ for respective subcases. 

For case {(iii)}, the examples are, in $S_{11}$, $\lambda_1=(1,2,8),\lambda_2=(1,2,4,4)$ and in $S_9$, $\lambda_1=(1,4,4),\lambda_2=(1,2,2,4)$ and $\lambda_1=(3,6),\lambda_2=(1,1,1,6)$ for respective subcases.
\end{example}

\begin{theorem}\label{t3}
Let $\lambda=(n-2,1,1) \vdash n$ and $X=X_1\cup X_2$ be the union of two distinct conjugacy classes of $S_n$. Then $rS_n=XS_{\lambda}$ if and only if $x_i$ has no fixed points and $x_j$ has $ 2$ fixed points with $|X_i|=|X_j|$ and $a_2+b_2=1$.
\end{theorem}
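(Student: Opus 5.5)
The plan is to reduce the statement to Theorem \ref{t2}(ii) and then test the one extra character that $(n-2,1,1)$ brings in. First I will list the partitions that dominate $\lambda=(n-2,1,1)$. A partition $\mu\trianglerighteq(n-2,1,1)$ must have $\mu_1\geq n-2$, so the only ones are $(n)$, $(n-1,1)$, $(n-2,2)$ and $(n-2,1,1)$. By Lemma \ref{l4}, $rS_n=XS_\lambda$ holds if and only if
\[|X_i|\chi_\mu(x_i)+|X_j|\chi_\mu(x_j)=0 \quad\text{for } \mu\in\{(n-1,1),(n-2,2),(n-2,1,1)\}.\]
The conditions for $\mu=(n-1,1)$ and $\mu=(n-2,2)$ are exactly those analysed for $Y_2=S_{(n-2,2)}$. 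So I will invoke the computation in the proof of Theorem \ref{t2}(ii), which gives exactly four possibilities:
\begin{enumerate}[label=\normalfont(\alph*)]
\item $a_1=b_1=1$ and $a_2=b_2=1$;
\item $a_1=b_1=1$, $a_2=0$, $b_2\geq 2$ and $|X_i|=(b_2-1)|X_j|$;
\item $a_1=0$, $b_1=3$, $a_2=b_2=0$ and $|X_i|=2|X_j|$;
\item $a_1=0$, $b_1=2$, $a_2+b_2=1$ and $|X_i|=|X_j|$.
\end{enumerate}
That computation is purely algebraic in the $a_l,b_l$, so it does not depend on the hypothesis $n>2k$ beyond the character formulas of Lemma \ref{characters} being valid.

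Next I will substitute each case into $\chi_{(n-2,1,1)}(x)=-a_2-a_1+1+\frac{a_1(a_1-1)}{2}$ from Lemma \ref{characters}(v).
\begin{enumerate}[label=\normalfont(\alph*)]
\item Both values are $-1$, so the weighted sum is $-(|X_i|+|X_j|)<0$ and the case fails.
\item $\chi(x_i)=0$ and $\chi(x_j)=-b_2<0$, so the sum is negative and the case fails.
\item $\chi(x_i)=1$ and $\chi(x_j)=-3+1+3=1$, so the sum is positive and the case fails.
\item $\chi(x_i)=1-a_2$ and $\chi(x_j)=-b_2-2+1+1=-b_2$. With $|X_i|=|X_j|$ the sum is $(1-a_2-b_2)|X_j|$, which vanishes precisely because $a_2+b_2=1$.
\end{enumerate}
This proves both directions at once. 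The forward direction follows because only case (d) survives. The converse follows because case (d) satisfies all three vanishing conditions: those for $(n-1,1)$ and $(n-2,2)$ by Theorem \ref{t2}(ii), and the one for $(n-2,1,1)$ by the computation above.

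The only delicate point is the range of $n$. Theorem \ref{t2} is stated for $n>4$, while the character formulas in Lemma \ref{characters} need $n$ large enough for $(n-2,2)$ and $(n-2,1,1)$ to be partitions, that is $n\geq 4$. I expect this bookkeeping to be the main, though minor, obstacle. I would handle it by noting that the argument is purely algebraic once those formulas hold, and by checking $n=4$ directly from the character table if needed.
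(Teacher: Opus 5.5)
Your proof is correct and follows the paper's argument: you use Lemma \ref{l4} to reduce to the four cases of Theorem \ref{t2}(ii), then test each case against $\chi_{(n-2,1,1)}$, and only the case $a_1=0$, $b_1=2$, $a_2+b_2=1$, $|X_i|=|X_j|$ survives. You are slightly more complete than the paper: you explicitly rule out the case $a_1=0$, $b_1=3$ (both character values equal $1$, so the weighted sum is $3|X_j|>0$), which the paper's proof skips, and you note the range of $n$, which the paper leaves unaddressed.
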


\begin{proof}
We have $\lambda=(n-2,1,1)$. If $\mu \trianglerighteq \lambda$ and $\mu \neq (n)$ then $\mu \in \{(n-1,1),(n-2,2),(n-2,1,1)\}$. Therefore $x_i,x_j$ satisfy conditions of Theorem \ref{t2}{\rm(ii)}. Now we identify further conditions using $\mu=(n-2,1,1)$ and Lemma \ref{l4}. We observe that first three cases of $x_i,x_j$ of Theorem \ref{t2}{\rm(ii)} are not possible since from Lemma \ref{characters}, we get $\chi_{\mu}(x_i)=\chi_{\mu}(x_j)=-1$ and $\chi_{\mu}(x_i)=0,\chi_{\mu}(x_j)=-b_2 \neq 0$. From the fourth case, we have $a_1=0, b_1=2$,$|X_i|=|X_j|$ with $a_2+b_2=1$
Therefore, we get $\chi_{\mu}(x_i)=-a_2+1$ and $\chi_{\mu}(x_j)=-b_2$. 
Now
\begin{align*}
\chi_{\mu}(x_i)|X_i|+\chi_{\mu}(x_j)|X_j|=&(-a_2+1)|X_j|+(-b_2)|X_j|\\
=&|X_j|(1-a_2-b_2)=0
\end{align*}
The converse follows similarly to Theorem \ref{t2}{\rm(i)},{\rm(ii)}.
\end{proof}

\begin{theorem} \label{t4}
Let $\lambda=(n-3,2,1) \vdash n$ and $X=X_1\cup X_2$ be the union of two distinct conjugacy classes of $S_n$. Then $rS_n=XS_{\lambda}$ if and only if $x_i$ has no fixed points, no $2$-cycles, no $3$-cycles, and $x_j$ has two fixed points, one $2$-cycle, no $3$-cycle and $|X_i|=|X_j|$.
\end{theorem}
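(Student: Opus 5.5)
The plan is to apply Lemma \ref{l4} with $\lambda=(n-3,2,1)$, reusing Theorem \ref{t3} for most of the work. The first step is to list the partitions $\mu\trianglerighteq(n-3,2,1)$ with $\mu\neq(n)$. Checking partial sums, these are
\[
(n-1,1),\quad (n-2,2),\quad (n-2,1,1),\quad (n-3,3),\quad (n-3,2,1).
\]
Here $(n-3,3)$ dominates $(n-3,2,1)$ because $n-3+3\geq n-3+2$. So $rS_n=XS_\lambda$ holds if and only if the equation \ref{eq:1} holds for each of these five characters.

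The first three partitions are exactly the set handled in Theorem \ref{t3}. Hence the vanishing for them is equivalent to the following, after relabelling $i,j$:
\[
a_1=0,\quad b_1=2,\quad |X_i|=|X_j|,\quad a_2+b_2=1.
\]
It remains to impose the two new conditions coming from $\mu=(n-3,3)$ and $\mu=(n-3,2,1)$. I will substitute these data into the formulas (iv) and (vi) of Lemma \ref{characters}.

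For $\mu=(n-3,3)$ this gives $\chi_\mu(x_i)=a_3-a_2$ and $\chi_\mu(x_j)=b_3+2b_2-b_2-1=b_3+b_2-1$. Since $|X_i|=|X_j|$, the condition becomes
\[
a_3+b_3+b_2-a_2-1=0.
\]
For $\mu=(n-3,2,1)$ this gives $\chi_\mu(x_i)=-a_3$ and $\chi_\mu(x_j)=2-b_3-2+\tfrac{2\cdot1\cdot0}{3}=-b_3$. The condition becomes $a_3+b_3=0$, and since both are non-negative, $a_3=b_3=0$.

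Feeding this back into the $(n-3,3)$ equation gives $b_2-a_2=1$. Combined with $a_2+b_2=1$, this yields $b_2=1$ and $a_2=0$. This is exactly the stated description: $x_i$ has no fixed points, no $2$-cycles and no $3$-cycles, while $x_j$ has two fixed points, one $2$-cycle and no $3$-cycles, and $|X_i|=|X_j|$.

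For the converse, I will substitute $a_1=a_2=a_3=0$, $b_1=2$, $b_2=1$, $b_3=0$ and $|X_i|=|X_j|$ into all five character formulas and check that each weighted sum vanishes. The first three sums vanish by Theorem \ref{t3}, since $a_2+b_2=1$. The other two vanish by the computations above. Lemma \ref{l4} then gives $rS_n=XS_\lambda$.

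The only real point of care, and the step I expect to be the obstacle, is correctly enumerating the dominating partitions, in particular not omitting $(n-3,3)$, together with the implicit size assumption on $n$. We need $n\geq 6$ so that all listed partitions are genuine partitions and the formulas of Lemma \ref{characters} apply.
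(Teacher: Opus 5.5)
Your proposal is correct and follows essentially the same route as the paper: list the five partitions dominating $(n-3,2,1)$, use Theorem \ref{t3} to handle $(n-1,1)$, $(n-2,2)$ and $(n-2,1,1)$, and then use $\mu=(n-3,3)$ and $\mu=(n-3,2,1)$ to force $a_3=b_3=0$, which gives $a_2=0$ and $b_2=1$. Your explicit remark that $n\geq 6$ is needed for $(n-3,3)$ to be a partition is a small point the paper leaves implicit.
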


\begin{proof}
Let $\mu\trianglerighteq \lambda$ and $\mu \neq (n)$. Then $\mu \in \{(n-1,1),(n-2,2),(n-3,3),(n-2,1,1),(n-3,2,1)\}$. Using Theorem \ref{t3}, we have $a_1=0,b_1=2,|X_i|=|X_j|$, with $a_2+b_2=1$. Now using $\mu=(n-3,3)$, we get $|X_i|(a_3-a_2)+|X_j|(b_3+b_2-1)=|X_j|(a_3+b_3-2a_2)=0$.  

Now for $\mu=(n-3,2,1)$, we have $\chi_{\mu}(x_i)=-a_3$, $\chi_{\mu}(x_j)=-b_3$. So
\begin{align*}
\chi_{\mu}(x_i)|X_i|+\chi_{\mu}(x_j)|X_j|=&-a_3|X_i|-b_3|X_j|=-(a_3+b_3)|X_j|\\
\end{align*}
Since $|X_j|\neq 0$, so $a_3=b_3=0$ and so $a_2=0$ and $b_2=1$. Converse follows similarly as well.

\end{proof}

\begin{example}
The conjugacy classes $X_1,X_2$ corresponding to partitions $(4,5),(1,1,2,5)$ respectively in group $S_9$, satisfy the properties of both Theorem \ref{t3} and Theorem \ref{t4}.
\end{example}

\begin{theorem}\label{u4}
Let $S_n$ be the symmetric group of degree $n$. Let $\lambda= (\lambda_1,\lambda_2,\dots\lambda_l)$ with $l\geq 4$ be a partition of $n$ and let $S_\lambda$ be the  Young subgroup corresponding to $\lambda$. Then there does not exist distinct conjugacy classes $X_i,X_j$ of $S_n $ such that $rS_n=(X_i \cup X_j)S_{\lambda}$ where $r \in \mathbb{N}$.

\end{theorem}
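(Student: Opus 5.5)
Since $l\geq 4$, every partition $\mu$ of $n$ with at most three parts other than $(n)$, and also $(n-3,1,1,1)$, dominates $\lambda$. Indeed, $\lambda_1+\dots+\lambda_i\leq n-(l-i)$, so $\lambda$ is dominated by $(n-3,1,1,1)$ whenever $l\geq 4$, and $(n-3,1,1,1)$ is dominated by $(n-3,2,1)$, $(n-3,3)$, $(n-2,1,1)$, $(n-2,2)$ and $(n-1,1)$. I would first verify this domination chain (assuming $n\geq 6$ so that these are genuine partitions; the small cases $n=4,5$ can be handled by the same computation with the relevant characters). Then, by Lemma \ref{l4}, any solution $rS_n=(X_i\cup X_j)S_\lambda$ must satisfy $|X_i|\chi_\mu(x_i)+|X_j|\chi_\mu(x_j)=0$ for every such $\mu$. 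In particular this holds for all $\mu\trianglerighteq(n-3,2,1)$ and for $\mu=(n-3,1,1,1)$.

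Applying Theorem \ref{t4} first, the pair is forced, up to relabelling, to satisfy $a_1=a_2=a_3=0$, $b_1=2$, $b_2=1$, $b_3=0$ and $|X_i|=|X_j|$. The whole proof then reduces to checking one more character: I evaluate $\chi_{(n-3,1,1,1)}$ from Lemma \ref{characters}(vii) on these cycle data. For $x_i$ it gives $\frac{(-1)(-2)(-3)}{6}-(-1)\cdot 0+0=-1$. For $x_j$ it gives $\frac{(1)(0)(-1)}{6}-(1)(1)+0=-1$. So the sum is $|X_i|(-1)+|X_j|(-1)=-2|X_j|\neq 0$, which contradicts Lemma \ref{l4}. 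Hence no such pair of conjugacy classes exists.

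The main obstacle I anticipate is making sure the reduction to Theorem \ref{t4} is legitimate. That theorem is stated as an "if and only if" for $S_{(n-3,2,1)}$, but I only use its forward direction, namely that the vanishing conditions for all $\mu\trianglerighteq(n-3,2,1)$, $\mu\neq(n)$, force the stated cycle data. I would phrase the argument that way: the set of $\mu$ dominating $\lambda$ contains the set of $\mu$ dominating $(n-3,2,1)$, so the equations used in the proof of Theorem \ref{t4} still hold. I would also double-check the dominance claims for borderline shapes, and handle $n<6$ separately if needed; for $n=4$, $\lambda=(1,1,1,1)$ and the code condition would force a regular-representation-type vanishing, which is easily checked to fail.
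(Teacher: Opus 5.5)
Your proposal is correct and follows the paper's argument exactly. Since $(n-3,2,1)\trianglerighteq(n-3,1,1,1)\trianglerighteq\lambda$, the forward direction of Theorem \ref{t4} forces $a_1=a_2=a_3=0$, $b_1=2$, $b_2=1$, $b_3=0$ and $|X_i|=|X_j|$; then $\chi_{(n-3,1,1,1)}=-1$ on both classes, which violates Lemma \ref{l4}.

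One correction: your opening claim that every partition of $n$ with at most three parts dominates $\lambda$ is false (e.g.\ $(3,3,2)\not\trianglerighteq(5,1,1,1)$). You never actually use it, though — the argument only needs the specific dominance chain through $(n-3,1,1,1)$, and that chain is valid.
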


\begin{proof}
Since $(n-3,2,1)\trianglerighteq \lambda$ so using Theorem \ref{t4},  we get $a_1=a_2=a_3=0$ and $b_1=2,b_2=1,b_3=0$. Now $(n-3,1,1,1)\trianglerighteq\lambda$ and we have $\chi_{(n-3,1,1,1)}(x_i)=\chi_{(n-3,1,1,1)}(x_j)=-1$. Thus, Lemma \ref{l4} is not satisfied. Hence, we have the result.
\end{proof}

\begin{conjecture}
Let $n\geq k\geq 5$ and $\lambda$ be a partition of $n$ with $l(\lambda)\geq k$. Suppose $X=\sqcup_{i=1}^{s} X_i$ with $1\leq s\leq k-2$ is the union of pairwise distinct conjugacy classes. Then $rS_n\neq XS_{\lambda}$ for any $r\in \mathbb Z_{\geq 0}$.
\end{conjecture}

\section{Codes in Coxeter groups of type \texorpdfstring{$B_n$}{Bn}, \texorpdfstring{$C_n$}{Cn} and \texorpdfstring{$D_n$}{Dn}}\label{Sec:4}

The finite Coxeter groups of type $B_n$ and $C_n$ are isomorphic. The group $B_n$ is the wreath product $C_2 \wr S_n= C_2^n \rtimes S_n$. We take $C_2 = \{ \pm 1 \}$ and the action of $S_n$ on $C_2^n$ is defined as follows:
\[\sigma\cdot(a_1,a_2,\dots,a_n)=(a_{\sigma(1)},a_{\sigma(2)},\dots,a_{\sigma(n)})\]

The group multiplication is defined as 
\[ [a_1,a_2,\dots,a_n;\sigma]\cdot [b_1,b_2,\dots,b_n;\tau]=[a_1b_{\sigma(1)},a_2b_{\sigma(2)},\dots,a_nb_{\sigma(n)};\sigma\tau]\]

We follow this convention from \cite{GAP4}, which considers the $S_n$ multiplication $\sigma\tau$ from left to right. Let $(\lambda_1,\lambda_2,\dots,\lambda_k)\vdash n$ and $(a_1,a_2,\dots,a_n) \in C_2^n$. We use symbol $\overline{\lambda}_i$ to correspond to the $\lambda_i$-cycle $(k_1,k_2,\dots,k_{\lambda_i})$ such that for the tuple $(a_{k_1},a_{k_2},\dots,a_{k_{\lambda_i}})$ we have $\prod a_{k_i}=-1$. The conjugacy classes of $B_n$ are in one-to-one correspondence with the signed partitions of $n$ denoted by $\left( \lambda_1^{s_1} \overbar{\lambda_1}^{t_1}\lambda_2^{s_2} \overbar{\lambda_2}^{t_2}\dots \lambda_k^{s_k} \overbar{\lambda_k}^{t_k} \right)$. Two elements of $B_n$ are conjugate if they have the same signed partitions. 

For example $[1,1,-1,-1;(1,2,3)], [1,-1,1,1;(1,2)(3,4)] \in C_2\wr S_3$ correspond to signed partitions $(\bar 1^1 \bar{3}^1)$ and $(2^1 \bar{2}^1)$ respectively.

The finite Coxeter group of type $D_n$ is an index two subgroup of $B_n$. As a set, it contains elements $[a_1,a_2,\dots,a_n;\sigma] \in B_n$ satisfying $\prod a_i=1$. We refer to \cite{Musili11} for more details. We define subgroups $H_0,H_1$ of $B_n$ as sets $\{[1,1,\dots,1;\sigma],[-1,-1,\dots,-1;\rho] |\;\sigma,\rho \in S_n\}$ and $\{[1,1,\dots,1;\sigma]|\; \sigma \in S_n\}$ respectively. We observe that $H_0 \cong C_2 \times S_n$ and $H_1 \cong S_n$.

\begin{lemma}\label{l:4.1}
Let $G$ be a finite group and $H, K$ be conjugate subgroups of $G$. Then for $r \in \mathbb N$, and a conjugacy closed subset $X$ of $G$, we have $rG=HX$ if and only if $rG=KX$.
\end{lemma}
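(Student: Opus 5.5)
Since $H$ and $K$ are conjugate, I will fix $g\in G$ with $K=g^{-1}Hg$. The plan is to show directly that $rG=HX$ forces $rG=KX$. The converse then follows by symmetry, because $H=gKg^{-1}$.

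I will use the counting definition of a code. The statement $rG=HX$ means that every $y\in G$ has exactly $r$ factorizations $y=hx$ with $(h,x)\in H\times X$. The one hypothesis on $X$ that enters is conjugacy closure, $g^{-1}Xg=X$.

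Given $z\in G$, I set $y=gzg^{-1}$. Consider the map $(h,x)\mapsto (g^{-1}hg,\,g^{-1}xg)$. It is a bijection from $H\times X$ onto $K\times X$, with inverse given by conjugation by $g^{-1}$, and it uses $g^{-1}Xg=X$. It sends pairs with $hx=y$ exactly to pairs $(k,x')$ with $kx'=g^{-1}yg=z$. Hence the number of factorizations of $z$ in $KX$ equals the number of factorizations of $y$ in $HX$, which is $r$. Since $z$ was arbitrary, $rG=KX$.

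An alternative is to work in $\mathbb C[G]$ using Lemma \ref{l1}(i). There $\overline{K}\,\overline{X}=g^{-1}\overline{H}g\,\overline{X}=g^{-1}\overline{H}\,\overline{X}g$, because $\overline{X}$ is central. Then $g^{-1}(r\overline{G})g=r\overline{G}$.

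The main point needing care is the order convention: the statement writes $HX$, whereas the introduction writes $A\cdot B$ with the conjugacy closed set first. Because $X$ is conjugacy closed, $\overline{X}$ is central, so $\overline{H}\,\overline{X}=\overline{X}\,\overline{H}$ and both orders give the same condition. The argument above therefore applies in either convention.
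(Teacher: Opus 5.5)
Your proof is correct and is essentially the paper's argument: the paper just observes that $rG=g(rG)g^{-1}=gHXg^{-1}=(gHg^{-1})(gXg^{-1})=KX$, which is your conjugation bijection $(h,x)\mapsto(g^{-1}hg,\,g^{-1}xg)$ (or your group-algebra computation) written as a one-line multiset identity. Your remark that centrality of $\overline{X}$ makes the order $HX$ versus $XH$ irrelevant is a useful point that the paper leaves implicit.
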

\begin{proof}
The proof is immediate using $K=gHg^{-1}$, and the conjugation closed property of $X$.
%We have $K=gHg^{-1}$ for some $g \in G$. If $rG=HX$ then $rG=grGg^{-1}=gHXg^{-1}=gHg^{-1}gXg^{-1}$=$KX$.
\end{proof}

\begin{theorem} \label{coxeter}
Let $G=C_2 \wr S_n$ and $H$ be a subgroup of $G$ conjugate $H_0$. Let $X_1,X_2$ be the conjugacy classes of $G$ with respect to signed partitions $(n)$ and $({\bar n})$. 
\begin{enumerate}[label=\normalfont(\roman*)]
\item
If $n$ is even then $r\cdot G=H(X_1 \cup X_2)$ where $r=2(n-1)!$
\item
If $n$ is odd then $r\cdot G=HX_1$ and $r\cdot G=HX_2$ where $r=(n-1)!$.
\end{enumerate}
\end{theorem}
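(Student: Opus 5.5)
By Lemma \ref{l:4.1} it suffices to take $H=H_0$. Since $X_1$, $X_2$ are conjugacy classes, each pair $(h,x)$ with $hx=g$ gives the pair $(h, hxh^{-1})$ with $(hxh^{-1})h=g$, and this is a bijection. So the order of the factors in $HX$ does not matter. The plan is a direct count: for a fixed $g=[a_1,\dots,a_n;\tau]\in G$, count the $h\in H_0$ with $h^{-1}g\in X$, where $X$ is $X_1\cup X_2$, $X_1$ or $X_2$ as appropriate. We then show this number is independent of $g$ and equals the stated $r$. By Lemma \ref{l1}(i), or directly from the definition, this gives $rG=HX$.

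First, write $h=[\varepsilon,\dots,\varepsilon;\sigma]$ with $\varepsilon\in\{\pm1\}$ and $\sigma\in S_n$. The vector $(\varepsilon,\dots,\varepsilon)$ is fixed by the $S_n$-action, so $h^{-1}=[\varepsilon,\dots,\varepsilon;\sigma^{-1}]$. The multiplication rule then gives
\[
h^{-1}g=[\varepsilon a_{\sigma^{-1}(1)},\dots,\varepsilon a_{\sigma^{-1}(n)};\sigma^{-1}\tau].
\]

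Next, recall that an element lies in $X_1\cup X_2$ exactly when its permutation part is an $n$-cycle. It lies in $X_1$ or in $X_2$ according to whether the product of all its sign entries is $+1$ or $-1$, since for a single $n$-cycle that product is the cycle's sign. The permutation part $\sigma^{-1}\tau$ is an $n$-cycle for exactly $(n-1)!$ choices of $\sigma$, because $\sigma\mapsto\sigma^{-1}\tau$ is a bijection of $S_n$. For each such $\sigma$, the sign product of $h^{-1}g$ is $\varepsilon^n\prod_i a_i$, since permuting the entries does not change their product.

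Finally, split by the parity of $n$.
\begin{enumerate}[label=\normalfont(\roman*)]
\item If $n$ is even, then $\varepsilon^n=1$, so both choices of $\varepsilon$ land in $X_1\cup X_2$. The count is therefore $2(n-1)!$ for every $g$.
\item If $n$ is odd, the two choices of $\varepsilon$ give opposite sign products. Exactly one of them lands in $X_1$ and the other in $X_2$, so each count is $(n-1)!$ for every $g$.
\end{enumerate}

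The only delicate step is making sure the inverse and the multiplication convention (left-to-right composition, as in \cite{GAP4}) are handled consistently. I expect this to be harmless, because $h$ has a constant sign vector and the conjugacy class of an $n$-cycle element depends only on the total sign product.
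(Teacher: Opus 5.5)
Your proof is correct and follows essentially the same route as the paper. You reduce to $H_0$ via Lemma~\ref{l:4.1}, then for each $g$ use the $(n-1)!$ choices of $\sigma$ making the permutation part an $n$-cycle, and note that the constant sign $\varepsilon$ multiplies the total sign product by $\varepsilon^n$; this gives two admissible choices when $n$ is even and exactly one per class when $n$ is odd.

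Fixing $g$ and counting the $h$ with $h^{-1}g\in X$ is a somewhat cleaner way to organize the count than the paper's. Your remark about the order of the factors is harmless, though not needed.
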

\begin{proof}
By Lemma \ref{l:4.1} it is enough to prove for $H$ as set $\{[1,1,\dots,1;\sigma],[-1,-1,\dots,-1;\rho] |\;\sigma,\rho \in S_n\}$.
We have \[ X_1= \left\{ [ b_1,b_2,\dots,b_n;\tau], \tau \in (1,2,\dots,n)^{S_n} \text{ and } \prod b_i=1\right\}, \]
\[ X_2=\left\{[ b_1,b_2,\dots,b_n;\tau], \tau \in (1,2,\dots,n)^{S_n} \text{ and } \prod b_i=-1\right\}.\]
We have $\mid X_1 \mid=\mid X_2 \mid=2^{n-1}(n-1)!$. Let $h_1=[1,1,\dots,1;\sigma] \in H$ and $x=[ b_1,b_2,\allowbreak \dots,b_n;\tau] \in X_1$. We have $h_1x=[ b_{\sigma(1)},b_{\sigma(2)},\dots,b_{\sigma(n)};\sigma\tau]$, and for $h_2=[-1,-1,\dots,-1;\sigma] \in H$, we get $h_2x=[ -b_{\sigma(1)},-b_{\sigma(2)},\allowbreak\dots,-b_{\sigma(n)};\sigma\tau]$. 

\text{Case {\rm (i).}} We have $\prod b_{\sigma(i)}=1$, and since $n$ is even, so $\prod (-1)^n b_{\sigma(i)}=1$, therefore $HX_1$ only contains those elements $[ a_1,a_2,\dots\allowbreak,a_n;\alpha]\in C_2\wr S_n$, with $\prod a_i=1$ and thus covers half the elements of $G$. Now we show that we can construct each such element of $G$, $2(n-1)!$ times. Let $g=[ a_1,a_2,\dots,a_n;\alpha] \in G$ with $\prod a_i=1$. Let $[b_1,a_2,\dots, b_n;\tau)] \in X_1$. Then there are $(n-1)!$ tuples $(\sigma,\tau)\in  S_n \times (1,2,\dots,n)^{S_n}$, satisfying $\alpha=\sigma\tau$. Moreover, for each given $g$ and for each tuple $(\sigma,\tau)$ we can construct $g$ twice by doing $[1,1,\dots,1;\sigma]\cdot [ b_1,b_2,\dots,b_n;\tau]=g$ and $[-1,-1,\dots,-1;\sigma]\cdot [b_1,b_2,\dots,b_n;\tau]=g$, that is, by taking $b_{\sigma(i)}=a_i$  and $b_{\sigma(i)}=-a_i$ respectively. Thus, we get
\[ HX_1=2(n-1)!\left\{[a_1,a_2,\dots,a_n;\sigma\;]\;\mid  \sigma \in S_n \text{ and } \prod a_i=1\right\}\]
Similarly, $HX_2$ only contains elements $[ a_1,a_2,\dots\allowbreak,a_n;\alpha]\in C_2\wr S_n$, with $\prod a_i=-1$ and covers the other half elements of $G$, $2(n-1)!$ times. So we have
\[ HX_2=2(n-1)!\left\{[\;a_1,a_2,\dots,a_n;\sigma\;]\;\mid  \sigma \in S_n \text{ and } \prod a_i=-1 \right\}\]
Thus, $H(X_1 \cup X_2)=2(n-1)!\cdot G$.

\text{Case {\rm (ii).}} Following the above case, we have $\prod b_{\sigma(i)}=1$, but since $n$ is odd, so $\prod (-1)^n b_{\sigma(i)}=-1$. Therefore, $HX_1$ covers all elements of the group $G$. For $[a_1,a_2,\dots,a_n,\alpha] \in G$, depending upon whether $\prod a_i$ is $1$ or $-1$ only one solution of either $b_{\sigma(i)}=a_i$ or $b_{\sigma(i)}=-a_i$ will be such that $\prod b_i=1$. Thus $HX_1$ covers all elements of $G$, $(n-1!)$ times. The case for $X_2$ follows similarly. Hence, the proof is complete.
\end{proof}

\begin{corollary}
Let $G$ be the finite Coxeter group of type $D_n$. 
\begin{enumerate}[label=\normalfont(\roman*)]
\item
If $n$ is even, then $r\cdot G= H(X_1\cup X_2)$ where $H \cong C_2 \times S_n$ is conjugate to $H_0$, $X_1,X_2$ are conjugacy classes of $G$ of signed partition $(n)$ and $r=2(n-1)!$.
\item
If $n$ is odd, then $r\cdot G= HX_1$ where $H \cong S_n$  is conjugate to $H_1$, $X_1$ conjugacy class $G$ of signed partition $(n)$ and $r=(n-1)!$.
\end{enumerate}

\end{corollary}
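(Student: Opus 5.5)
We will derive the corollary from the computations in the proof of Theorem \ref{coxeter}, restricted to the index-two subgroup $G=D_n=\{[a_1,\dots,a_n;\sigma]\in C_2\wr S_n : \prod a_i=1\}$. The conjugacy classes in part (i) need interpreting. The class of signed partition $(\bar n)$ has $\prod b_i=-1$, so it does not lie in $D_n$. We therefore read $X_1\cup X_2$ as the set of $D_n$-elements of signed partition $(n)$, namely
\[
\{[b_1,\dots,b_n;\tau] : \tau\in(1,2,\dots,n)^{S_n},\ \prod b_i=1\}.
\]
When $n$ is even this $B_n$-class splits into two $D_n$-classes $X_1,X_2$; when $n$ is odd it stays a single $D_n$-class $X_1$. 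In both cases the union is exactly the $B_n$-class of $(n)$, which is the $X_1$ of Theorem \ref{coxeter}. Since that set is closed under $D_n$-conjugation, Lemma \ref{l:4.1} lets us replace any conjugate of $H_0$ (resp. $H_1$) inside $D_n$ by $H_0$ (resp. $H_1$) itself.

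For (i), take $n$ even. Then $[-1,\dots,-1;\rho]$ has product $(-1)^n=1$, so $H_0\subseteq D_n$. The Case (i) computation in the proof of Theorem \ref{coxeter} gives
\[
H_0X_1=2(n-1)!\cdot\{[a_1,\dots,a_n;\alpha] : \prod a_i=1\}=2(n-1)!\cdot D_n,
\]
where $X_1$ is the $B_n$-class of $(n)$. The reason is as follows. Fix $g=[a;\alpha]$ with $\prod a_i=1$. Each of the $(n-1)!$ factorizations $\alpha=\sigma\tau$ with $\tau$ an $n$-cycle yields exactly two representations of $g$: one with the trivial sign vector and $b_{\sigma(i)}=a_i$, and one with the all-$(-1)$ vector and $b_{\sigma(i)}=-a_i$. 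Both have $\prod b_i=1$ because $n$ is even. This is precisely $r\,D_n=H(X_1\cup X_2)$ with $r=2(n-1)!$.

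For (ii), take $n$ odd. Now $[-1,\dots,-1;\rho]\notin D_n$, so we use $H_1\cong S_n$, which clearly lies in $D_n$. Let $h=[1,\dots,1;\sigma]$ and $x=[b;\tau]$ with $\tau$ an $n$-cycle and $\prod b_i=1$. Then $hx=[b_{\sigma(1)},\dots,b_{\sigma(n)};\sigma\tau]$. Fix $g=[a;\alpha]\in D_n$ and a factorization $\alpha=\sigma\tau$. The only admissible $b$ is $b_{\sigma(i)}=a_i$, and it automatically satisfies $\prod b_i=\prod a_i=1$, so $x\in X_1$. Hence each $g$ is hit exactly once per factorization, that is, $(n-1)!$ times, giving $(n-1)!\cdot D_n=H_1X_1$.

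The main point requiring care is the bookkeeping of which sign vectors are allowed. One has to confirm that each factorization $\alpha=\sigma\tau$ contributes exactly two products in the even case and exactly one in the odd case. One also has to confirm that the count of $n$-cycles $\tau$ with $\sigma\tau=\alpha$ is $(n-1)!$: for each $\sigma$ there is at most one such $\tau$, namely $\tau=\sigma^{-1}\alpha$, and as $\sigma$ varies, $\tau$ ranges over all $(n-1)!$ $n$-cycles. We would additionally check the conjugacy hypothesis: in (i), \emph{conjugate to $H_0$} must be understood as conjugate within $D_n$ (or within $B_n$, which is harmless since $X$ is closed under $B_n$-conjugation) for Lemma \ref{l:4.1} to apply.
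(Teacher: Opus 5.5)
Your proposal is correct and follows essentially the same route as the paper. For $n$ even you reuse the Case (i) computation $H_0X_1=2(n-1)!\cdot D_n$ from the proof of Theorem \ref{coxeter}, taking $X_1\cup X_2$ to be the split $B_n$-class of $(n)$; for $n$ odd you use $H_1$ and the unique choice $b_{\sigma(i)}=a_i$ for each of the $(n-1)!$ factorizations $\alpha=\sigma\tau$. Your explicit factorization count and your note on which conjugation Lemma \ref{l:4.1} needs are more careful than the paper's proof, but the argument is the same.
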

\begin{proof}
\text{Case {\rm (i).}} In this case, the conjugacy class of the signed partition $(n)$ splits into two conjugacy classes of $D_n$. Moreover, $H$ remains a subgroup of $D_n$. Thus from Theorem \ref{coxeter}, we immediately have $H(X_1 \cup X_2)=2(n-1)!\cdot D_n$. 

\text{Case {\rm (ii).}} For odd $n$, we take $H=\{[1,1,\dots,1;\sigma] \mid \sigma \in S_n\} \cong S_n$. Let $[a_1,a_2,\dots,a_n;\alpha] \in G$ and so $\prod a_i=1$. Following case (ii) of Theorem \ref{coxeter}, for each $(\sigma,\tau) \in S_n \times X_1$, we can uniquely choose $[b_1,b_2,\dots,b_n;\sigma] \in X_1$ such that $[1,1,\dots,1;\sigma]\cdot [b_1,b_2,\dots,b_n;\tau]=[a_1,a_2,\dots,a_n;\alpha]$. Thus $HX_1=(n-1)!\cdot G$.
\end{proof}

\section{Codes in Generalized Symmetric Groups}\label{Sec:5}

The generalized symmetric group is the wreath product $C_m \wr S_n= C_m^n \rtimes S_n$. We take $C_m=\{1,w,w^2,\dots,w^{m-1}\}$ and the action of $S_n$ on $C_m^n$ is defined as follows:
\[ \sigma\cdot (a_1,a_2,\dots,a_n)=(a_{\sigma(1)},a_{\sigma(2)},\dots, a_{\sigma(n)}) \]
The group multiplication is defined as
\[[a_1,a_2,\dots,a_n;\sigma]\cdot [b_1,b_2,\dots,b_n;\tau] =[a_1b_{\sigma(1)},a_2b_{\sigma(2)},\dots,a_nb_{\sigma(n)};\sigma\tau] \]

%\lambda_2^{s_{21}} w\lambda_2^{s_{2w}}\dots w^{m-1}\lambda_2^{s_{2w^{m-1}}}

Let $[a_1,a_2,\dots,a_n;\sigma] \in C_m \wr S_n$. Let $\sigma$ be written as product of disjoint cycles and $(k,\sigma(k),\dots,\sigma^{r}(k))$ be a cycle in $\sigma$. The product $a_ka_{\sigma(k)}\dots a_{\sigma^{r}(k)}$  is called the cycle product corresponding to that cycle of $\sigma$. The type of an element $[a_1,a_2,\dots,a_n;\sigma]$ is defined as the $m\times n$ matrix $(a_{ij})$ such that the entry $a_{ij}$ is the number of $j$-cycles in $\sigma$ with cycle product $w^{i-1}$. Two elements $[a_1,a_2,\dots,a_n;\sigma],[b_1,b_2,\dots,b_n;\tau] \in C_m \wr S_n$ are conjugate if they have the same cycle type. We refer to \cite[Section 3.7]{Ker71} for more details.

We use the following equivalent notation. Let $(\lambda_1,\lambda_2,\dots,\lambda_k)\vdash n$ and $(a_1,a_2,\dots,a_n) \in C_m^n$. We use symbol ${}_{w^j}\lambda_i$ to correspond to the $\lambda_i$-cycle $(k_1,k_2,\dots,k_{\lambda_i})$ such that for the tuple $(a_{k_1},a_{k_2},\dots,a_{k_{\lambda_i}})$ we have $\prod a_{k_i}=w^j$. For $j=0$, we use $\lambda$ and $ {}_1\lambda$ interchangeably. Let $s_{iw^j} \in \mathbb{N}$ for all $1\leq i \leq k$ and $0\leq j \leq m-1$. We call $ ( \lambda_1^{s_{11}} {}_w\lambda_1^{s_{1w}}\dots {}_{w^{m-1}}\lambda_1^{s_{1w^{m-1}}}\allowbreak \dots \lambda_k^{s_{k1}} {}_w\lambda_k^{s_{kw}}\dots {}_{w^{m-1}}\lambda_k^{s_{kw^{m-1}}} )$ an $m$-color partition of $n$. Two elements of $C_m \wr S_n$ are conjugate if they have the same $m$-color partitions and thus the conjugacy classes of $C_m \wr S_n$ are in one-to-one correspondence with the $m$-color partitions of $n$. 
%Let $[a_1,a_2,\dots,a_n;\sigma] \in S_n$.

For example $[1,w,1;(1,2)], [w^2,1,w;(1,2)] \in C_3\wr S_3$ correspond to $3$-color partitions $( 1^1 {}_w2^1)$ and $(_{w}1^1 {}_{w^2}2^1)$ respectively. We define subgroup $K_0$ of $G$ as set $\{[w^j,w^j,\dots,w^j;\sigma]|\; \sigma \in S_n,j=0,1,\dots,m-1\}$. We observe that $K_0 \cong C_m \times S_n$.

\begin{theorem} \label{generalized}
Let  $G=C_m \wr S_n$ and $H$ be a subgroup of $G$ conjugate to $K_0$. Let $X_{w^0},X_{w^1},\dots,X_{w^{m-1}}$ be the conjugacy classes of $G$ with respect to $m$-color partitions $(_{w^j}n)$ for $j=0,1,2,\dots,m-1$ respectively. 
Let ${\rm gcd}(m,n)=d$ and $m=dt$. Then $r\cdot G=H\cdot \bigcup\limits_{i \in I} X_i$ where $r=dl(n-1)!$, $l=1,2,\dots,t$ and $I$ is index set of size $dl$ constructed by taking $l$ many elements from each coset of $C_m/\langle w^d \rangle$.
\end{theorem}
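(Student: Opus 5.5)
We will follow the strategy of Theorem \ref{coxeter}. By Lemma \ref{l:4.1} it suffices to take $H=K_0=\{[w^j,\dots,w^j;\sigma]\}$. The class $X_{w^i}$ consists of the elements $[b_1,\dots,b_n;\tau]$ with $\tau$ an $n$-cycle and cycle product $\prod b_k=w^i$. Hence
\[|X_{w^i}|=(n-1)!\,m^{n-1}.\]
A product $h\cdot x$ with $h=[w^j,\dots,w^j;\sigma]$ and $x\in X_{w^i}$ equals $[w^jb_{\sigma(1)},\dots,w^jb_{\sigma(n)};\sigma\tau]$. The key invariant of the target $g=[a_1,\dots,a_n;\alpha]$ is the total product $\pi(g)=\prod a_k\in C_m$. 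Since $\sigma$ only permutes the coordinates, $\pi(hx)=w^{jn}w^{i}$.

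We will count, for fixed $g$, the triples $(j,\sigma,x)$ with $hx=g$. For each $\sigma\in S_n$ with $\sigma^{-1}\alpha$ an $n$-cycle there is exactly one $\tau$; the number of such $\sigma$ is the number of $n$-cycles, namely $(n-1)!$. For each $j$ the entries of $x$ are then forced: $b_{\sigma(k)}=w^{-j}a_k$. This $x$ lies in $X_{w^i}$ exactly when $w^i=\pi(g)w^{-jn}$.

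So, writing $\pi(g)=w^c$, the number of representations of $g$ in $H\cdot X_{w^i}$ is $(n-1)!$ times the number of $j\in\{0,\dots,m-1\}$ with
\[jn\equiv c-i \pmod m.\]
This congruence is solvable iff $d\mid c-i$, and it then has exactly $d$ solutions modulo $m$. Hence
\[H\cdot X_{w^i}=d(n-1)!\cdot\{g:\pi(g)\in w^{i}\langle w^d\rangle\}.\]

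Now take the union over $i\in I$. The classes $X_{w^i}$ are distinct, so multiplicities add. Each coset $w^{c}\langle w^d\rangle$ of $\langle w^d\rangle$ in $C_m$ corresponds to a residue of $c$ mod $d$, and there are $d$ such cosets. An element $g$ with $\pi(g)=w^c$ is counted $d(n-1)!$ times for each $i\in I$ with $i\equiv c\pmod d$.

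Choosing exactly $l$ indices from each of the $d$ classes of exponents modulo $d$ gives $|I|=dl$, and every $g$ is then covered exactly $ld(n-1)!$ times. This yields $r\cdot G=H\cdot\bigcup_{i\in I}X_i$ with $r=dl(n-1)!$, for $1\le l\le t$ (each class has $t$ elements).

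The main obstacle is bookkeeping rather than ideas. One must check the convention in the statement: the phrase ``cosets of $C_m/\langle w^d\rangle$'' should be read as residue classes of the exponent $i$ mod $d$. One must also verify that $\sigma\tau$ being forced to equal $\alpha$ leaves exactly $(n-1)!$ choices of $\sigma$, under the left-to-right multiplication convention of \cite{GAP4}. Finally, the count $(n-1)!$ uses that $\tau\mapsto\sigma=\alpha\tau^{-1}$ is a bijection from the $n$-cycles, which holds in either convention.
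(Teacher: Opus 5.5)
Your proof is correct and follows essentially the same route as the paper. You reduce to $K_0$ via Lemma \ref{l:4.1}, note that $hx$ has total coordinate product $w^{jn+i}$, count $(n-1)!$ pairs $(\sigma,\tau)$ with $\sigma\tau=\alpha$ and $d$ solutions $j$ of $jn\equiv c-i \pmod m$, and then add multiplicities over $l$ indices chosen from each of the $d$ cosets of $\langle w^d\rangle$. Your explicit bijection $\tau\mapsto\alpha\tau^{-1}$ and your range $1\le l\le t$ are slightly cleaner bookkeeping than the paper's, whose proof ends with $l\le t-1$, inconsistent with its own statement.
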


\begin{proof}
By using Lemma \ref{l:4.1}, it is enough to prove for $H$ as set $\{[w^j,w^j,\dots,w^j;\sigma]|\;\sigma\in S_n,j=0,1,2,\dots,m-1\}$.
We have \[ X_{w^k}= \{ [ b_1,b_2,\dots,b_n;\tau ]| \tau \in (1,2,\dots,n)^{S_n} \text{ and } \prod b_i=w^k\}, \]
We have $\mid X_{w^k} \mid=m^{n-1}(n-1)!$. Let $h=[w^j,w^j,\dots,w^j;\sigma] \in H$. Let $x=[ b_1,b_2,\allowbreak \dots,b_n;\tau] \in X_{w^k}$ and so $\prod b_i=w^k$. We have $hx=[ w^jb_{\sigma(1)},w^jb_{\sigma(2)},\dots,w^jb_{\sigma(n)};\sigma\tau]$.

We have $\prod w^j b_{\sigma(i)}=w^{jn} \prod b_i=(w^{jn})w^k=w^{jn+k}$, therefore $HX_{w^k}$ only contains those elements $[ a_1,a_2,\dots\allowbreak,a_n;\alpha]\in C_m\wr S_n$, with $\prod a_i \in S= \{w^{k},w^{d+k},\dots,w^{(t-1)d+k}\}$ where $m=dt$. Now we show that we can construct each such element of $G$, $d(n-1)!$ times. Let fix $g=[ a_1,a_2,\dots,a_n;\alpha] \in G$ with $\prod a_i \in S$. Let $[b_1,b_2,\dots, b_n;\tau)] \in X_{w^k}$. Then there are $(n-1)!$ tuples $(\sigma,\tau)\in  S_n \times (1,2,\dots,n)^{S_n}$, satisfying $\alpha=\sigma\tau$. Now for each such $g$ and for each tuple $(\sigma,\tau)$ the equations $[w^j,w^j,\dots,w^j;\sigma]\cdot [b_1,b_2,\dots,b_n;\tau]=g$ has $d$ many solutions pairs $(j,x)$, by taking $w^jb_{\sigma(i)}=a_i$. Since we have $\prod a_i =w^{k+sd}$ with $s=0,1,2,\dots,t-1$, so for each $s$, the equation $jn \equiv sd \pmod m$ has $d$ solutions. Thus, we get
\[ HX_{w^k}=d(n-1)!\left\{[a_1,a_2,\dots,a_n;\sigma\;]\;\mid  \sigma \in S_n \text{ and } \prod a_i\in \{w^k, w^{d+k},\dots , w^{(t-1)d+k}\}  \right\} \]

We note that $HX_{w^j}=HX_{w^k}$ if and only if $w^j,w^k$ are in same coset of $C_m/\langle w^d \rangle$. Thus $H \cdot\bigcup\limits_{i\in I} X_i =dl(n-1)! \cdot G$ where $l=1,2,\dots,t-1$ and $I$ is index set of size $dl$ constructed by taking $l$ many elements from all cosets of $C_m/\langle w^d \rangle$.
\end{proof}

\textbf{Data availability statement:} This manuscript has no associated data.

\textbf{Acknowledgment.}
The authors thank Prof. Amit Kulshrestha for his interest and valuable suggestions in this work. The first named author acknowledges IISER Mohali for the PhD fellowship during this work. The second named author acknowledges IISER Mohali for the institute post-doctoral fellowship during this work.

\bibliography{ref}  

@article {GL20,
    AUTHOR = {Green, Holly M. and Liebeck, Martin W.},
     TITLE = {Some codes in symmetric and linear groups},
   JOURNAL = {Discrete Math.},
  FJOURNAL = {Discrete Mathematics},
    VOLUME = {343},
      YEAR = {2020},
    NUMBER = {8},
     PAGES = {111719, 5},
      ISSN = {0012-365X,1872-681X},
   MRCLASS = {05C25 (05C69 94B60)},
  MRNUMBER = {4108300},
MRREVIEWER = {Theodore\ C.\ Enns},
       DOI = {10.1016/j.disc.2019.111719},
       URL = {https://doi.org/10.1016/j.disc.2019.111719},
}

@article {FL26,
    AUTHOR = {Fang, Teng and Li, Jinbao},
     TITLE = {Proof of a conjecture of {G}reen and {L}iebeck on codes in
              symmetric groups},
   JOURNAL = {Discrete Math.},
  FJOURNAL = {Discrete Mathematics},
    VOLUME = {349},
      YEAR = {2026},
    NUMBER = {5},
     PAGES = {Paper No. 114999, 6},
      ISSN = {0012-365X,1872-681X},
   MRCLASS = {05C25 (05C69 94B60)},
  MRNUMBER = {5017563},
       DOI = {10.1016/j.disc.2026.114999},
       URL = {https://doi.org/10.1016/j.disc.2026.114999},
}

@book {macdonald1998symmetric,
    AUTHOR = {Macdonald, I. G.},
     TITLE = {Symmetric functions and {H}all polynomials},
    SERIES = {Oxford Classic Texts in the Physical Sciences},
   EDITION = {Second},
      NOTE = {With contribution by A. V. Zelevinsky and a foreword by
              Richard Stanley,
              Reprint of the 2008 paperback edition [MR1354144]},
 PUBLISHER = {The Clarendon Press, Oxford University Press, New York},
      YEAR = {2015},
     PAGES = {xii+475},
      ISBN = {978-0-19-873912-8},
   MRCLASS = {05E05 (01A75 05-02 20C30 20C33 20K01 33C80 33D80)},
  MRNUMBER = {3443860},
}

@misc{baker2016character,
  title={Character Polynomials and Row Sums of the Symmetric Group},
  author={Baker, Kirby and Early, Edward},
  note={\texttt{http://edwarde.create.stedwards.edu/rowsums.pdf}},
  year={2016}
}

@book{steinberg2012representation,
  title={Representation theory of finite groups: an introductory approach},
  author={Steinberg, Benjamin},
  volume={68},
  year={2012},
  publisher={Springer}
}

@article {Kra86,
    AUTHOR = {Kratochv\'{\i}l, Jan},
     TITLE = {Perfect codes over graphs},
   JOURNAL = {J. Combin. Theory Ser. B},
  FJOURNAL = {Journal of Combinatorial Theory. Series B},
    VOLUME = {40},
      YEAR = {1986},
    NUMBER = {2},
     PAGES = {224--228},
      ISSN = {0095-8956,1096-0902},
   MRCLASS = {94B60 (05C99)},
  MRNUMBER = {838221},
       DOI = {10.1016/0095-8956(86)90079-1},
       URL = {https://doi.org/10.1016/0095-8956(86)90079-1},
}

@book {Musili11,
    AUTHOR = {Musili, C.},
     TITLE = {Representations of finite groups},
    SERIES = {Texts and Readings in Mathematics},
    VOLUME = {3},
      NOTE = {Reprint of the 1993 original},
 PUBLISHER = {Hindustan Book Agency, New Delhi},
      YEAR = {2011},
     PAGES = {xviii+237},
      ISBN = {978-93-80250-18-2},
   MRCLASS = {20C15 (20-01 20C30)},
  MRNUMBER = {2857707},
}

@article {Feng_21,
    AUTHOR = {Feng, Tao and Li, Weicong and Zhou, Jingkun},
     TITLE = {On codes in the projective linear group {${\rm PGL}(2,q)$}},
   JOURNAL = {Finite Fields Appl.},
  FJOURNAL = {Finite Fields and their Applications},
    VOLUME = {75},
      YEAR = {2021},
     PAGES = {Paper No. 101812, 11},
      ISSN = {1071-5797,1090-2465},
   MRCLASS = {94B60 (05C25 11T06 20G40)},
  MRNUMBER = {4269558},
MRREVIEWER = {Felix\ Ulmer},
       DOI = {10.1016/j.ffa.2021.101812},
       URL = {https://doi.org/10.1016/j.ffa.2021.101812},
}

@book {Ker71,
    AUTHOR = {Kerber, Adalbert},
     TITLE = {Representations of permutation groups. {I}},
    SERIES = {Lecture Notes in Mathematics, Vol. 240},
 PUBLISHER = {Springer-Verlag, Berlin-New York},
      YEAR = {1971},
     PAGES = {v+192},
   MRCLASS = {20C30},
  MRNUMBER = {325752},
MRREVIEWER = {J.\ S.\ Frame},
}

@manual{GAP4,
    organization = "The GAP~Group",
    title        = "{GAP -- Groups, Algorithms, and Programming,
                    Version 4.16.0}",
    year         = 2026,
    url          = "\url{https://www.gap-system.org}",
    }
\bibliographystyle{plain} % Set style to amsalpha

\end{document}